\newtheorem{theorem}{Theorem}
\newtheorem{example}[theorem]{Example}
\newtheorem{corollary}[theorem]{Corollary}
\newtheorem{proposition}[theorem]{Proposition}
\newcommand{\leg}[2]{\genfrac{(}{)}{}{}{#1}{#2}} % the Legendre symbol
\def\Z{{\mathbb Z}}
\begin{document}

\title[Incongruences]{Incongruences for modular forms\\ and applications to partition functions}
\author{Sharon Garthwaite and Marie Jameson}

\maketitle

\section*{Abstract}

The study of arithmetic properties of coefficients of modular forms $f(\tau) = \sum a(n)q^n$ has a rich history, including deep results regarding congruences in arithmetic progressions.  Recently, work of C.-S. Radu, S. Ahlgren, B. Kim, N. Andersen, and S. L\"{o}brich have employed the $q$-expansion principle of P. Deligne and M. Rapoport in order to determine more about where these congruences can occur.  Here, we extend the method to give additional results for a large class of modular forms.  We also give analogous results for generalized Frobenius partitions and the two mock theta functions $f(q)$ and $\omega(q).$

%%%%%%%%%%%%%%%%%%%%%%%%%%%%%%%%%%%%%%%%%%%%%%%%%%%%

\section{Introduction and statement of results}\label{intro}

Let $p(n)$ be the number of partitions of $n$, that is, the number of ways to write $n$ as a sum of non-increasing sequence of positive integers.  (See, for example, \cite{Andrews98}).  The well-known Ramanujan congruences for the partition function state that for all $n$, we have
\begin{align*}
p(5n+4) &\equiv 0\pmod{5}\\
p(7n+5) &\equiv 0\pmod{7}\\
p(11n+6) &\equiv 0\pmod{11}.
\end{align*}
These congruences are so surprising and beautiful that one is left to wonder if there any other congruences of this kind.  Thus we define a Ramanujan congruence to be a congruence of the form $p(\ell n+t) \equiv 0\pmod{\ell},$ where $\ell$ is prime; S. Ramanujan conjectured that the only Ramanujan congruences for the partition function are the three given above. I. Kiming and J. Olsson \cite{KimingOlsson92} proved that for primes $\ell\geq 5,$ a Ramanujan congruence for the partition function must satisfy $t \equiv (1-\ell^2)/24 \pmod{\ell}.$ Later, S. Ahlgren and M. Boylan \cite{AhlgrenBoylan03} built on this result to prove that Ramanujan's conjecture is true. 

This work has inspired a number of results that describe where congruences can (or, rather, cannot) occur for Fourier coefficients of modular forms and mock theta functions \cite{Dewar11, Andersen14, AhlgrenKim15, Choi16, Loebrich17}. In this work, we give additional results regarding the non-existence of congruences.

Let $B \in \Z$, $k \in \frac{1}{2}\Z$, and $N\in \Z^+$.  As in \cite{AhlgrenKim15, Andersen14}, we set
\[\mathcal{S}(B,k,N,\chi) := \{\eta^B(\tau)F(\tau) : F(\tau) \in M_k^!(\Gamma_0(N),\chi)\}\]
where $M_k^!(\Gamma_0(N),\chi)$ is the space of weakly holomorphic modular forms of weight $k$ and level $N$ with character $\chi.$ (See Section \ref{prelim} for additional information.) In particular, if $f(\tau) \in S(B,k,N,\chi),$ then we have
\[f(\tau) = q^{B/24}\sum_{n\geq n_0}a(n)q^n, \qquad q:=e^{2\pi i\tau},\]
and any poles are supported on the cusps.

The following theorem can be obtained by applying the $q$-expansion principle of P. Deligne and M. Rapoport and is very much inspired by the work of Ahlgren, B. Kim, N. Andersen, and S. L\"obrich \cite{AhlgrenKim15, Andersen14, Loebrich17}.

\begin{theorem}\label{mainthm}
Let $\ell$ be prime, let $f(\tau) = q^{B/24}\sum_{n\geq n_0}a(n)q^n \in \mathcal{S}(B,k,N,\chi)$ have rational $\ell$-integral coefficients, and let $m\in \mathbb{Z}^+.$ Let $t_0 \in \mathbb{Z}$ such that
\[\ell \nmid a(t_0) \qquad \text{and}\qquad a(n)= 0 \text{ for all } n<t_0 \text{ with } n\equiv t_0 \pmod{m}.\]
Then for all $t\in \{0, \dots, m-1\}$ such that
\[t \equiv t_0d^2+B\frac{d^2-1}{24}\pmod{m}\]
for some integer $d$ with $\gcd(d,6Nm)=1,$ we have that
\[\sum a(mn+t)q^n\not\equiv 0\pmod{\ell}.\]
\end{theorem}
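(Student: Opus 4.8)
The plan is to argue by contradiction, using the hypothesis as a minimality condition and the $q$-expansion principle to rule out "sporadic" vanishing mod $\ell$ inside the progression $mn+t$. Suppose that for some admissible $t$ we had $\sum a(mn+t)q^n \equiv 0 \pmod{\ell}$. I would first reduce to a single congruence class: fix an integer $d$ with $\gcd(d, 6Nm)=1$ realizing $t \equiv t_0 d^2 + B(d^2-1)/24 \pmod m$, and track how the operator "extract the progression $mn+t$" interacts with the action of the diamond/twisting operators and the $U$- and $V$-operators on $q$-expansions. The key point is that multiplying $f$ by a suitable additive character sum (a twist by a Dirichlet character of modulus $m$, or more precisely the standard linear combination $\sum_{j} \zeta^{-tj} f|_{?}$) isolates $q^{B/24}\sum a(mn+t) q^{mn}$ up to a power of $q$, and then applying $U_m$ produces, up to an overall rational power of $q$ and a constant, an element of a space of the form $\mathcal{S}(B', k, N', \chi')$ for controlled $B', N', \chi'$. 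I would make sure the weight is unchanged and that $B'$ is congruent to something matching $t_0, t$ via the $d^2$-scaling, which is exactly the arithmetic in the displayed congruence.

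Next, I would invoke the $q$-expansion principle of Deligne–Rapoport in the form already used in the cited work of Ahlgren–Kim, Andersen, and L\"obrich: if a weakly holomorphic modular form in $\mathcal{S}(B',k,N',\chi')$ with $\ell$-integral rational coefficients has its $q$-expansion $\equiv 0 \pmod \ell$ at one cusp (here, the cusp $\infty$, which is what $\sum a(mn+t)q^n \equiv 0$ says after the normalization above), then its $q$-expansion is $\equiv 0 \pmod \ell$ at \emph{every} cusp, in particular at the cusps Galois-conjugate to $\infty$ under $\mathrm{Gal}(\overline{\Q}/\Q)$ acting through the cyclotomic character. Translating this back, the vanishing mod $\ell$ of the progression $mn + t$ forces the vanishing mod $\ell$ of the progression $mn + t'$ for every $t'$ obtained from $t$ by the $d^2$-substitution — and by the hypothesis this family of progressions includes $mn + t_0$, i.e.\ we may choose $d$ (or a chain of such $d$'s, using that the relevant residues $d^2 \bmod{24m}$ form a group) to move $t$ back to $t_0$.

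The contradiction then comes from the extremality built into the hypothesis. We have $\ell \nmid a(t_0)$ and $a(n) = 0$ for all $n < t_0$ with $n \equiv t_0 \pmod m$; the latter guarantees that $t_0$ is genuinely the bottom of the progression (so no cancellation or shift of indices can hide the coefficient $a(t_0)$), and the former says $\sum a(mn + t_0) q^n \not\equiv 0 \pmod \ell$. Since we just deduced $\sum a(mn+t_0)q^n \equiv 0 \pmod \ell$, this is absurd, and the theorem follows.

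The main obstacle I anticipate is bookkeeping rather than ideas: carefully verifying that the operator isolating the arithmetic progression, followed by $U_m$ (and possibly a $V_d$ and a twist), genuinely lands back inside a space $\mathcal{S}(B', k, N', \chi')$ of the required shape — in particular that the eta-power exponent transforms by $B \mapsto B d^2$ (modulo $24$-type adjustments that feed into the "$B(d^2-1)/24$" term), that the level stays a divisor of a fixed $N'$ coprime to $\ell$, and that holomorphy away from the cusps is preserved. Making the $q$-expansion principle applicable requires the form to be a genuine modular form on an honest congruence subgroup with $\ell$-integral expansions at all cusps, so I would need to check the integrality and cusp conditions (this is where the condition $\gcd(d, 6Nm) = 1$ does its work, clearing denominators and keeping the eta-multiplier under control).
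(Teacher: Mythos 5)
Your proposal is correct in outline and takes essentially the same route as the paper: isolate the progression via the additive character sum $f_{m,t}$, use the action of a matrix $A=(\begin{smallmatrix}a&b\\c&d\end{smallmatrix})\in\Gamma_0(NN_m)$ to carry $t$ back to $t_0$ via $t_A\equiv ta^2+B\frac{a^2-1}{24}\pmod m$, and apply the Deligne--Rapoport/Radu $q$-expansion principle to transfer $\ell$-integrality to the slashed expansion, contradicting $\ell\nmid a(t_0)$. The one caveat is that the ``bookkeeping'' you defer is really the main imported technical input (Ahlgren--Kim's transformation law, Proposition \ref{prop52}): the correct mechanism is not that the twisted form lands back in some $\mathcal{S}(B',k,N',\chi')$ after $U_m$, but that $f_{m,t}^{24mN}\Delta^s$ is a holomorphic form on $\Gamma_1(NN_m)$ and slashing by $A$ permutes the $f_{m,t_A}^{24mN}$, which is what makes the $q$-expansion principle applicable and yields $\ell\mid a(t_0)$ from the leading coefficient.
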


\begin{example}
Consider $f(\tau) = q^{-1/24}\sum_{n=0}^\infty p(n)q^n$ (so $B=-1,$ $k=0,$ $N=1,$ and $\chi$ is trivial). For $m=\ell=5,$ Theorem \ref{mainthm} guarantees that since $p(0) = 1,$ we have for $t = 0,3$ that %For this example we use d = 1, 7, 13, 19 to obtain t's.
\[\sum p(5n+t)q^n \not\equiv 0\pmod{5}.\] Similarly, since $p(1) = 1,$ we have for $t = 1,2$ that \[\sum p(5n+t)q^n \not\equiv 0\pmod{5}.\] This shows that that the only possible Ramanujan congruence for the partition function modulo 5 is the famous congruence
\[\sum p(5n+4)q^n \equiv 0\pmod{5}.\]
\end{example}

As noted by L\"obrich in \cite{Loebrich17}, a calculation similar to the above example can be executed for any prime $m=\ell\geq 5$ with $\leg{-23}{\ell} = -1$ to show that a Ramanujan congruence $\sum p(\ell n+t)q^n \equiv 0\pmod{\ell},$ if it exists, must satisfy $t \equiv (1-\ell^2)/24.$ This (partially) recovers the well-known theorem of Kiming and Olsson \cite{KimingOlsson92} mentioned above, which proves the statement for any prime $\ell \geq 5$.  Here, we generalize this approach to give a result that applies to any $f(\tau) \in \mathcal{S}(B,k,N,\chi).$

\begin{corollary} \label{cortomainthm}
Let $\ell$ be prime, let $f(\tau)$ be as in Theorem \ref{mainthm}, and let $m \in \mathbb{Z}^+.$  Let $n_1$ be the smallest integer such that $a(mn_1)\neq 0$ (if it exists).  If $\ell \nmid a(mn_1)$ then for each $d$ coprime to $6Nm$ and
\[t \equiv \frac{Bd^2-B}{24} \pmod{m},\]
we have that
\[\sum a(mn+t)q^n\not\equiv 0\pmod{\ell}.\]
Similarly, let $n_2$ be the smallest integer such that $a(mn_2+1)\neq 0$ (if it exists). If $\ell \nmid a(mn_2+1)$ then for each $d$ coprime to $6Nm$ and
\[t \equiv \frac{(B+24)d^2-B}{24}  \pmod{m},\]
we have that
\[\sum a(mn+t)q^n\not\equiv 0\pmod{\ell}.\]

In particular, for primes $\ell \geq 5$ and $m=\ell$, if $\ell \nmid a(\ell n_1)$, $\ell \nmid a(\ell n_2+1)$, and $\leg{B(B+24)}{\ell}=-1$ then $\sum a(\ell n+t)q^n\not\equiv 0\pmod{\ell}$ for all $t$ except possibly $t \equiv B(\ell^2-1)/24 \pmod{\ell}$.
\end{corollary}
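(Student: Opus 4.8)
The plan is to deduce all three parts of Corollary~\ref{cortomainthm} from Theorem~\ref{mainthm}: the two displayed non-congruences by a direct substitution into the theorem, and the ``in particular'' statement by a short counting argument over the squares modulo $\ell$.

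For the first non-congruence I would apply Theorem~\ref{mainthm} with $t_0=mn_1$. By the minimality defining $n_1$ one has $a(n)=0$ for all $n<mn_1$ with $n\equiv 0\pmod m$, i.e.\ for all $n<t_0$ with $n\equiv t_0\pmod m$, and $\ell\nmid a(t_0)$ by hypothesis, so the theorem applies; and for $d$ coprime to $6Nm$,
\[
t_0d^2+B\frac{d^2-1}{24}=mn_1d^2+B\frac{d^2-1}{24}\equiv\frac{Bd^2-B}{24}\pmod m .
\]
The second non-congruence is the same argument with $t_0=mn_2+1$, using instead
\[
t_0d^2+B\frac{d^2-1}{24}\equiv d^2+B\frac{d^2-1}{24}=\frac{(B+24)d^2-B}{24}\pmod m .
\]

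For the final statement, set $m=\ell$ with $\ell\ge 5$ prime and assume $\ell\nmid a(\ell n_1)$, $\ell\nmid a(\ell n_2+1)$, and $\leg{B(B+24)}{\ell}=-1$. Then $24$ is invertible modulo $\ell$, and neither $B$ nor $B+24$ vanishes modulo $\ell$. The first observation is that, as $d$ ranges over integers with $\gcd(d,6N\ell)=1$, the residue of $d^2$ modulo $\ell$ ranges over \emph{exactly} the set $R$ of nonzero squares modulo $\ell$: such a $d$ is automatically prime to $\ell$, so $d^2\bmod\ell\in R$; conversely, given a square root $s$ of an element of $R$, the Chinese Remainder Theorem (or Dirichlet's theorem on primes in arithmetic progressions) produces an integer $d\equiv s\pmod\ell$ with $\gcd(d,6N\ell)=1$. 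Writing $c_1,c_2$ for the residues of $B/24$ and $(B+24)/24$ modulo $\ell$, the two non-congruences above thus show that $\sum a(\ell n+t)q^n\not\equiv 0\pmod\ell$ whenever $t$ lies in $T_1:=c_1R-c_1$ or in $T_2:=c_2R-c_1$.

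The conceptual heart is to check that $T_1\cup T_2$ is all of $\Z/\ell\Z$ except the single class $-c_1$. Since $c_1,c_2\not\equiv 0$, the dilate $c_iR$ equals $R$ if $c_i$ is a square modulo $\ell$ and equals the complementary set $R'$ of nonzero non-squares otherwise; and $\leg{c_1}{\ell}\leg{c_2}{\ell}=\leg{B(B+24)}{\ell}\leg{24}{\ell}^2=-1$, so $c_1$ and $c_2$ have opposite quadratic character. Hence $\{c_1R,\,c_2R\}=\{R,\,R'\}$, giving $c_1R\cup c_2R=(\Z/\ell\Z)\setminus\{0\}$ and, after translating by $-c_1$, $T_1\cup T_2=(\Z/\ell\Z)\setminus\{-c_1\}$. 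Finally $-c_1\equiv B(\ell^2-1)/24\pmod\ell$ because $\ell^2-1\equiv -1\pmod\ell$, which is exactly the exceptional class in the statement. I expect the only real subtleties to be this last character bookkeeping and the surjectivity of $d\mapsto d^2\bmod\ell$ under the constraint $\gcd(d,6N\ell)=1$; both are routine, and everything else is immediate from Theorem~\ref{mainthm}.
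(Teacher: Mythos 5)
Your proposal is correct and follows essentially the same route as the paper: the two displayed non-congruences come from Theorem~\ref{mainthm} with $t_0=mn_1$ and $t_0=mn_2+1$, and the ``in particular'' clause from the observation that the two resulting sets of admissible $t$ are disjoint translates of $c_1R$ and $c_2R$ with $c_1,c_2$ of opposite quadratic character, covering all residues except $-B/24\equiv B(\ell^2-1)/24\pmod{\ell}$. The paper phrases this last count via the Legendre-symbol identity $\leg{24t+B}{\ell}=\leg{24t_0+B}{\ell}$ rather than via dilates of the set of squares, but the argument is the same.
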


\begin{proof}[Proof of Corollary \ref{cortomainthm}]
The first part of the claim follows directly from Theorem \ref{mainthm} with $t_0 = mn_1 \equiv 0 \pmod{m}$, and the second part of the claim follows similarly with $t_0 = mn_2+1 \equiv 1 \pmod{m}.$

For the last part, we observe, as in \cite{Loebrich17}, that $d$ will run through all nonzero congruence classes modulo $\ell$, so $d^2$ will take on $(\ell-1)/2$ distinct values modulo $\ell.$  Additionally, we see that $t \equiv t_0d^2+B \frac{d^2-1}{24} \pmod{\ell}$ is equivalent to 
\[
\leg{24t+B}{\ell} = \leg{24t_0 +B}{\ell}.
\]
Thus, if $\leg{B(B+24)}{\ell}=-1$ then $(Bd_1^2-B)/24 \not\equiv ((B+24)d_2^2-B)/24 \pmod{\ell}$ for any $d_1, d_2$.  Thus the first two parts of the corollary guarantee that $\sum a(\ell n+t)q^n\not\equiv 0\pmod{\ell}$ for $\ell-1$ incongruent values of $t$.  The remaining value corresponds to $d\equiv 0 \pmod{\ell}$.
\end{proof}

These results apply to many other partition-theoretic functions whose generating functions are modular forms, such as generalized $k$-colored Frobenius partitions (which are defined in Section \ref{gfp}).  We can also extend them to the well-known classical mock theta functions $f(q)$ and $\omega(q)$, which have partition-theoretic interpretations, but are not quite modular.

This paper is organized as follows.  In Section \ref{prelim}, we state the definitions and results needed to prove Theorem \ref{mainthm}. In Section \ref{proof} we prove our main theorem, Theorem \ref{mainthm}. In Section \ref{gfp}, we apply our result to the generalized Frobenius partitions, giving various corollaries and a new congruence for $c\phi_5(n).$ In Section \ref{mock}, we apply our results to mock theta functions $f(q)$ and $\omega(q)$.  Finally, in Section \ref{conc}, we discuss the possibilities for further results for other mock theta functions, including $\nu(q).$

%%%%%%%%%%%%%%%%%%%%%%%%%%%%%%%%%%%%%%%%%%%%%%%%%%%%

\section{Preliminaries}\label{prelim}

First we recall some key definitions; see \cite{Ono04} for details. For $k$ an integer or half integer, $N$ a positive integer (where $4\mid N$ if $k \notin \mathbb{Z}$), and $\chi$ a Dirichlet character modulo $N$, we let $M_k^!(\Gamma_0(N),\chi)$ be the space of weakly holomorphic modular forms of weight $k$ with Nebentypus $\chi$ on $\Gamma_0(N).$

As in Section \ref{intro}, we set
\[\mathcal{S}(B,k,N,\chi) := \{\eta^B(\tau)F(\tau) : F(\tau) \in M_k^!(\Gamma_0(N),\chi)\},\]
where
\[\eta(\tau) := q^{1/24}\prod_{n=1}^\infty (1-q^n), \qquad q:=e^{2\pi i\tau}.\]
If $f(\tau) \in S(B,k,N,\chi),$ then we have
\[f(\tau) = q^{B/24}\sum_{n\geq n_0}a(n)q^n.\]
For any positive integer $m$ and any integer $t$ we define $\zeta_m := \exp(2\pi i/m)$ and
\begin{align*}
f_{m,t}(\tau) &:= \frac{1}{m}\sum_{\lambda = 0}^{m-1}\zeta_m^{-\lambda(t+B/24)}f\left(\frac{\tau+\lambda}{m}\right)\\
&= q^{(t+B/24)/m}\sum a(mn+t)q^n
\end{align*}
as in \cite[Section~5]{AhlgrenKim15}. 

In order to state a transformation law for $f_{m,t}$, set
\begin{equation} \label{N_m}
N_m := m, 8m, 3m, \text{or } 24m,
\end{equation}
according to whether $\gcd(m,6)=1, 2, 3, \text{or } 6$, respectively.  Define the slash operator as follows: for an analytic function $f$ on the upper half plane, an integer $k$, and a matrix $\gamma =(\begin{smallmatrix}a&b\\c&d\end{smallmatrix}) \in \mathrm{SL}_2(\mathbb{Z}),$ we have that
\[(f |_k\gamma)(\tau) := (c\tau + d)^{-k}f\left(\frac{a\tau + b}{c\tau + d}\right).\]
Work of Ahlgren and Kim gives the following transformation property.

\begin{proposition}[{\cite[pages 126-127]{AhlgrenKim15}}] \label{prop52}
Suppose that $f$ is as above, and define $\kappa := 24mN(k+B/2).$ For every $A = (\begin{smallmatrix}a&b\\ c&d\end{smallmatrix}) \in \Gamma_0(NN_m)$ with $\gcd(a,6)=1,$ we have
\[f_{m,t}^{24mN}|_\kappa A = f_{m,t_A}^{24mN},\]
where
\begin{equation}
\label{t_A}
t_A \equiv ta^2+B\frac{a^2-1}{24}\pmod{m}.
\end{equation}
In particular, there exists $s\in \mathbb{N}$ such that \[\Delta^sf_{m,t}^{24mN} \in M_{\kappa + 12s}(\Gamma_1(NN_m)).\]
\end{proposition}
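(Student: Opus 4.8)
The plan is to prove the transformation law first and then extract the holomorphy statement as a corollary. Since raising to a power commutes with the slash operator, $\bigl((f_{m,t})^{24mN}\bigr)|_\kappa A = \bigl(f_{m,t}|_{w}A\bigr)^{24mN}$ where $w := k+B/2$ is the weight of $f$ and $\kappa = 24mN\,w$; so it suffices to understand $f_{m,t}|_w A$ and then raise the outcome to the $24mN$-th power. Writing $M_\lambda := \left(\begin{smallmatrix}1&\lambda\\0&m\end{smallmatrix}\right)$ so that $f\!\left(\tfrac{\tau+\lambda}{m}\right)=f(M_\lambda\tau)$, the heart of the matter is to commute $M_\lambda$ past $A$. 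First I would establish a factorization $M_\lambda A = \gamma_\lambda M_{\lambda'}$ with $\gamma_\lambda \in \Gamma_0(N)$ and a new translation parameter $\lambda'$. Since $N_m$ is a multiple of $m$ and $A\in\Gamma_0(NN_m)$, the lower-left entry $c$ of $A$ satisfies $NN_m\mid c$; in particular $m\mid c$ (whence $ad\equiv 1\pmod m$) and $N\mid mc$ (whence $\gamma_\lambda\in\Gamma_0(N)$, as the lower-left entry of $\gamma_\lambda$ is $mc$). Solving the entrywise equations, integrality of the remaining entries forces $a\lambda'\equiv b+\lambda d\pmod m$, so $\lambda' \equiv a^{-2}\lambda + a^{-1}b \pmod m$, equivalently $\lambda \equiv a^2\lambda' - ab\pmod m$; in particular $\lambda\mapsto\lambda'$ is a bijection of $\Z/m\Z$.

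Next I would feed this into the twisted average. Applying the modularity of $f=\eta^BF$ under $\gamma_\lambda\in\Gamma_0(N)$ rewrites $f(M_\lambda A\tau)$ as a constant $c_\lambda$ (an eta-multiplier times $\chi$ times determinant-$m$ automorphy factors) times $f(M_{\lambda'}\tau)$, so that $f_{m,t}|_wA = \tfrac1m\sum_\lambda \zeta_m^{-\lambda(t+B/24)}c_\lambda\, g_{\lambda'}$ with $g_\mu(\tau):=f(M_\mu\tau)$. Reindexing by $\mu=\lambda'$ via $\lambda\equiv a^2\mu - ab$, the phase becomes $\zeta_m^{-\mu\,a^2(t+B/24)}\cdot\zeta_m^{ab(t+B/24)}$, and matching $\zeta_m^{-\mu\,a^2(t+B/24)}$ against the desired phase $\zeta_m^{-\mu(t_A+B/24)}$ forces exactly $t_A \equiv a^2t + B\tfrac{a^2-1}{24}\pmod m$. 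Here the hypothesis $\gcd(a,6)=1$ plays its essential role: it gives $a^2\equiv 1\pmod{24}$, so that $B\tfrac{a^2-1}{24}\in\Z$ and the fractional, $\eta$-induced part $B/24$ of the exponent is preserved under reindexing; without it $t_A$ would not even be a well-defined residue modulo $m$.

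The main obstacle is the multiplier bookkeeping. After reindexing, the identity $f_{m,t}|_wA = \zeta\, f_{m,t_A}$ holds provided the accumulated constant $c_\lambda\,\zeta_m^{ab(t+B/24)}$ is independent of $\lambda$. The genuine difficulty is that the eta-multiplier $\nu_\eta(\gamma_\lambda)^B$ a priori depends on $\lambda$ through a Dedekind sum $s(d_{\gamma_\lambda},mc)$, and one must verify that this dependence is exactly cancelled by the residual determinant-$m$ and $(c\tau+d)$ automorphy factors. This is precisely where the calibration of $N_m$ — the factors $8m$, $3m$, or $24m$ according to $\gcd(m,6)=2,3,6$ — is used, supplying the powers of $2$ and $3$ needed to control the eta-multiplier on $\Gamma_0(NN_m)$. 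Granting this, $\zeta$ is a root of unity assembled from the $m$-th roots of the sieving phase, the $24$-th roots of $\nu_\eta^B$, and the value $\chi(d)$; raising to the $24mN$-th power annihilates the first two contributions, yielding $f_{m,t}^{24mN}|_\kappa A = f_{m,t_A}^{24mN}$ with integral weight $\kappa$ and integral $q$-expansion.

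Finally I would deduce the holomorphy statement. Restricting the transformation law to $A\in\Gamma_1(NN_m)$, where $a\equiv 1\pmod{NN_m}$ and hence $a^2\equiv 1\pmod m$ with $B(a^2-1)/24\equiv 0\pmod m$, gives $t_A\equiv t$ and trivializes the character contribution to $\zeta$; together with invariance under $\tau\mapsto\tau+1$ (immediate from the integral $q$-expansion), this shows $f_{m,t}^{24mN}$ transforms with weight $\kappa$ and trivial character on $\Gamma_1(NN_m)$. Since $f\in\mathcal S(B,k,N,\chi)$ is weakly holomorphic, $f_{m,t}^{24mN}$ is holomorphic on $\mathbb{H}$ with possible poles only at the cusps; because $\Delta$ is holomorphic, nonvanishing on $\mathbb{H}$, and vanishes at every cusp, multiplying by $\Delta^s$ for $s$ large enough clears all cusp poles. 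As $\Delta^s$ has weight $12s$ and is modular on $\mathrm{SL}_2(\Z)$, the product $\Delta^s f_{m,t}^{24mN}$ lies in the holomorphic space $M_{\kappa+12s}(\Gamma_1(NN_m))$, as claimed.
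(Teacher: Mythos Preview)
The paper does not supply its own proof of this proposition: it is quoted directly from Ahlgren--Kim \cite[pp.~126--127]{AhlgrenKim15} as a black box, so there is no in-paper argument to compare against. Your sketch follows exactly the route Ahlgren--Kim take there: factor $M_\lambda A=\gamma_\lambda M_{\lambda'}$ with $\gamma_\lambda\in\Gamma_0(N)$ (using $m\mid N_m$), reindex the sieving sum via $\lambda\mapsto\lambda'$, read off $t_A\equiv a^2t+B(a^2-1)/24\pmod m$ from the phase (using $a^2\equiv 1\pmod{24}$), and then kill residual roots of unity by the $24mN$-th power; the $\Gamma_1$ holomorphy follows by restricting and clearing cusp poles with $\Delta^s$. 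One small addition that strengthens your multiplier discussion: since $N\mid c$, the lower-right entry of $\gamma_\lambda$ is $d-c\lambda'\equiv d\pmod N$, so $\chi(\gamma_\lambda)=\chi(d)$ is already independent of $\lambda$ before any power is taken, which removes one of the potential $\lambda$-dependencies you flagged. The part you explicitly ``grant'' --- that the eta-multiplier contribution $\nu_\eta(\gamma_\lambda)^B$ together with the automorphy factors is $\lambda$-independent up to a $24m$-th root of unity, with $N_m$ calibrated precisely to make this work --- is indeed the technical heart of the Ahlgren--Kim computation, and your identification of it as the crux is accurate even though you do not carry it out.
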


Finally, we require the following fact, called the $q$-expansion principle, which was originally due to Deligne and Rapoport (although the precise statement we use here is a corollary due to C.-S. Radu).
\begin{theorem}[{\cite[Corollaire 3.12]{DeligneRapoport73}}, {\cite[Corollary 5.3]{Radu12}}]\label{qexpprinc}
Let $k, N$ be positive integers and let $f\in M_k (\Gamma_1(N)).$ If $f$ has coefficients in $\mathbb{Z}[\zeta_N]$ then the same is true of $f|_k \gamma$ for each $\gamma \in \Gamma_0(N).$ 
\end{theorem}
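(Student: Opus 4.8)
The plan is to deduce this statement from the foundational, geometric form of the $q$-expansion principle together with the fact that the operators $f \mapsto f|_k\gamma$ for $\gamma \in \Gamma_0(N)$ are induced by automorphisms of the modular curve that are defined over $\mathbb{Z}$. First I would pass to the algebraic interpretation of modular forms: a weight-$k$ form on $\Gamma_1(N)$ is a global section of $\omega^{\otimes k}$ on (a suitable integral model of) the modular curve $X_1(N)$, and ``having coefficients in a ring $R$'' means that the section is defined over $R$, where the $q$-expansion at the cusp $\infty$ is recovered by evaluating the form on the Tate curve $\mathrm{Tate}(q)$ with its canonical differential and level structure. The content of the Deligne--Rapoport principle in its basic form is that this evaluation map $M_k(\Gamma_1(N), R) \to R[[q]]$ is injective and detects the ring of definition: a complex form whose $q$-expansion lies in $R[[q]]$ for a subring $R \subseteq \mathbb{C}$ is already defined over $R$, and conversely a form defined over $R$ has $q$-expansion in $R[[q]]$.

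Next I would reduce the slash operator to a diamond operator. Since $\Gamma_1(N)$ is normal in $\Gamma_0(N)$ with quotient $(\mathbb{Z}/N\mathbb{Z})^\times$ via $\gamma = (\begin{smallmatrix} a&b\\c&d\end{smallmatrix}) \mapsto d \bmod N$, the map $f \mapsto f|_k\gamma$ preserves $M_k(\Gamma_1(N))$ and depends only on $d$; it is exactly the diamond operator $\langle d\rangle$. The key point is that $\langle d\rangle$ is not merely a complex-analytic symmetry but is induced by an automorphism of $X_1(N)$ defined over $\mathbb{Z}[1/N]$, acting on the universal level structure by $\iota \mapsto d\cdot\iota$ and lifting to the line bundle $\omega^{\otimes k}$. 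Consequently this automorphism carries sections defined over $\mathbb{Z}[\zeta_N]$ to sections defined over $\mathbb{Z}[\zeta_N]$. I would stress that this is the correct source of integrality: it is \emph{not} enough to observe that a nebentypus piece scales by $\chi(d)$, since the character values $\chi(d)$ need not even lie in $\mathbb{Q}(\zeta_N)$, and integrality is instead forced by the algebraicity of $\langle d\rangle$ over $\mathbb{Z}$.

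With these pieces in place the argument closes quickly. Given $f$ with $q$-expansion in $\mathbb{Z}[\zeta_N][[q]]$, the descent form of the $q$-expansion principle shows $f$ is defined over $\mathbb{Z}[\zeta_N]$; applying the integral diamond automorphism shows $f|_k\gamma = \langle d\rangle f$ is again defined over $\mathbb{Z}[\zeta_N]$; and the forward direction of the principle then guarantees that the $q$-expansion of $f|_k\gamma$ at $\infty$ lies in $\mathbb{Z}[\zeta_N][[q]]$, which is the claim. I expect the main obstacle to be setting up the integral theory carefully enough to run this over $\mathbb{Z}[\zeta_N]$ rather than merely over $\mathbb{Z}[1/N, \zeta_N]$: at primes dividing $N$ the curve $X_1(N)$ is not smooth, so one needs the regular models of Deligne--Rapoport and Drinfeld level structures (as in Katz--Mazur) to make sense of ``defined over $\mathbb{Z}[\zeta_N]$'' and to guarantee that the diamond automorphism and the $q$-expansion principle persist there. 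A secondary technical point is to match the analytic normalization of the slash operator, including the factor $(c\tau+d)^{-k}$ and the $N$-th roots of unity produced by the level structure at the cusps, with the algebraic diamond automorphism, so as to confirm that no roots of unity outside $\mathbb{Z}[\zeta_N]$ (nor any denominators) are introduced. Alternatively, one may simply invoke \cite[Corollary 5.3]{Radu12}, which packages exactly this integral statement.
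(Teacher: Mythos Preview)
The paper does not prove this statement at all: it is quoted as a known fact, attributed to Deligne--Rapoport with the precise formulation taken from Radu, and is then used as a black box in the proof of Theorem~\ref{mainthm}. So there is no ``paper's proof'' to compare your proposal against.

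That said, your outline is the standard route to this result and is essentially correct. Identifying $f\mapsto f|_k\gamma$ for $\gamma\in\Gamma_0(N)$ with the diamond operator $\langle d\rangle$, noting that $\langle d\rangle$ comes from an automorphism of the moduli problem defined integrally, and sandwiching with the two directions of the geometric $q$-expansion principle is exactly how one deduces the statement. Your caution about working over $\mathbb{Z}[\zeta_N]$ rather than $\mathbb{Z}[1/N,\zeta_N]$, and hence needing the Katz--Mazur/Deligne--Rapoport integral models at bad primes, is well placed; this is precisely what the cited references handle. Your side remark that one cannot simply decompose into nebentypus eigenspaces because the character values need not lie in $\mathbb{Q}(\zeta_N)$ is also a good observation. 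If you were writing this up, the cleanest option is the one you mention at the end: cite Radu's Corollary~5.3 directly, which is exactly what the paper does.
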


%%%%%%%%%%%%%%%%%%%%%%%%%%%%%%%%%%%%%%%%%%%%%%%%%%%%

\section{Proof of Theorem \ref{mainthm}} \label{proof}

Assume without loss of generality that $f$ has integral coefficients and consider $t\in \{0, \ldots, m-1\}$ such that
\[t \equiv t_0d^2+B\frac{d^2-1}{24}\pmod{m}\]
for some integer $d$ with $\gcd(d,6Nm)=1.$ First note that we can find a matrix $A = (\begin{smallmatrix}a&b\\ c&d\end{smallmatrix}) \in \Gamma_0(NN_m)$ with $\gcd(a,6)=1,$ and $t_A \equiv t_0 \pmod{m},$ where $t_A$ is defined as in \eqref{t_A}. By Proposition \ref{prop52}, we then have

\begin{align*}
\left(f_{m,t}^{24mN}|_\kappa A \right)(\tau) &= f_{m,t_0}^{24mN}(\tau)\\
&= \left[q^{(t_0+B/24)/m}\sum_{n=0}^\infty a(mn+t_0)q^n \right]^{24mN}\\
&= a(t_0)^{24mN}q^{(24t_0+B)N}\left( 1 + O(q)\right).
\end{align*}

Now suppose for the sake of contradiction that $f_{m,t}\equiv 0\pmod{\ell}.$ Then  by Proposition \ref{prop52} we have that
\[\left(\ell^{-1}f_{m,t}\right)^{24mN}\Delta^s \in M_{\kappa+12s}(\Gamma_1(NN_m))\cap \mathbb{Z}[\zeta_{NN_m}][[q]].\]
so by the $q$-expansion principle (see Theorem \ref{qexpprinc}), we have that
\[\left(\ell^{-1}f_{m,t}\right)^{24mN}\Delta^s|_{\kappa+12s} A \in  \mathbb{Z}[\zeta_{NN_m}][[q]].\]
But by the above argument, this is
\[\left(\ell^{-24mN}f_{m,t}^{24mN}\right)\Delta^s|_{\kappa+12s} A = \left(\frac{1}{\ell}a(t_0)\right)^{24mN}q^{(24t_0+B)N + s}\left( 1 + O(q)\right),\]
which only holds if $\ell \mid a(t_0),$ a contradiction. \qed

%%%%%%%%%%%%%%%%%%%%%%%%%%%%%%%%%%%%%%%%%%%%%%%%%%%%

\section{Generalized Frobenius partitions} \label{gfp}

In 1984, G. Andrews defined combinatorial objects called $k$-colored generalized Frobenius partitions.  Given a positive integer $k,$ the generating function for their counting function $c\phi_{k}(n)$ is given by \cite[Theorem 5.2]{Andrews84}
\[\sum_{n=0}^{\infty} c\phi_{k}(n)q^n = \frac{\displaystyle\sum_{m_1, m_2, \dots, m_{k-1}\in \mathbb{Z}}q^{Q(m_1, m_2, \dots, m_{k-1})}}{\displaystyle\prod_{n=1}^{\infty}(1-q^n)^k},\]
where $Q(m_1, m_2, \dots, m_{k-1})$ is
\[Q(m_1, m_2, \dots, m_{k-1}) := \sum_{i=1}^{k-1}m_i^2+\sum_{1\leq i<j\leq k-1}m_im_j.\] One can use the theory of theta functions to relate this generating function to a modular form (see, for example, \cite[Lemma 1]{JamesonWieczorek18}); for some character $\chi_k,$ we have that
\begin{equation}\label{thetafcn}
\sum_{m_1, m_2, \dots, m_{k-1}\in \mathbb{Z}}q^{Q(m_1, m_2, \dots, m_{k-1})} \in M_{\frac{k-1}{2}}^!(\Gamma_0(N),\chi_k),
\end{equation}
where $N=k$ or $2k$ according to whether $k$ is odd or even.

Generalized Frobenius partitions are of interest here because they are known to exhibit congruences analogous to those satisfied by $p(n)$ and their generating functions are related to the theory of modular forms, but they are not expected to be generalized eta-quotients for $k\geq 3$ (so the work of \cite{Loebrich17} does not apply). Thus we set
\[f_k(\tau) := q^{-k/24}\sum_{n=0}^{\infty} c\phi_{k}(n)q^n \in \mathcal{S}(-k,(k-1)/2,N,\chi_k)\]
and apply Theorem \ref{mainthm} to give the following statement.

\begin{corollary} \label{cphikthm}
For $\ell$ prime and $m\in \mathbb{Z}^+,$ if $t_0 \in \{0, \dots, m-1\}$ such that $\ell \nmid c\phi_k(t_0)$ then for all $t\in \{0, \dots, m-1\}$ such that
\[t \equiv t_0d^2-k\frac{d^2-1}{24}\pmod{m}\]
for some integer $d$ with $\gcd(d,6mk)=1,$ we have that
\[\sum c\phi_k(mn+t)q^n\not\equiv 0\pmod{\ell}.\]
\end{corollary}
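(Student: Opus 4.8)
The plan is to deduce Corollary \ref{cphikthm} directly from Theorem \ref{mainthm}, the only work being to match up the parameters. First I would note that, as recorded in Section \ref{gfp}, the identity $\prod_{n\ge 1}(1-q^n)^k = q^{-k/24}\eta^k(\tau)$ together with \eqref{thetafcn} shows that $f_k(\tau) = q^{-k/24}\sum_{n\ge 0} c\phi_k(n)q^n = \eta^{-k}(\tau)\sum_{m_1,\dots,m_{k-1}\in\mathbb{Z}}q^{Q(m_1,\dots,m_{k-1})}$ lies in $\mathcal{S}(-k,(k-1)/2,N,\chi_k)$ with $N = k$ or $2k$; so Theorem \ref{mainthm} applies to $f_k$ with $B = -k$ and level $N$.

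Next I would verify the hypotheses of Theorem \ref{mainthm} for the given $t_0\in\{0,\dots,m-1\}$ with $\ell\nmid c\phi_k(t_0)$. The coefficients $a(n) = c\phi_k(n)$ are nonnegative integers, hence $\ell$-integral for every prime $\ell$, and $\ell\nmid a(t_0)$ is exactly the hypothesis imposed on $t_0$. The remaining condition, that $a(n) = 0$ for all $n < t_0$ with $n\equiv t_0\pmod m$, holds automatically: since $0\le t_0\le m-1$, any such $n$ satisfies $n\le t_0-m\le -1$, and $c\phi_k$ vanishes on the negative integers.

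With the hypotheses checked, Theorem \ref{mainthm} gives $\sum c\phi_k(mn+t)q^n\not\equiv 0\pmod\ell$ for every $t\in\{0,\dots,m-1\}$ with $t\equiv t_0d^2 - k\frac{d^2-1}{24}\pmod m$ for some integer $d$ with $\gcd(d,6Nm)=1$. Finally, since $N\in\{k,2k\}$ the integers $6Nm$ and $6km$ have the same set of prime divisors, so $\gcd(d,6Nm)=1$ if and only if $\gcd(d,6km)=1$; rewriting the coprimality condition in this form yields precisely the statement of the corollary.

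Since this is a specialization of the main theorem, I do not expect a genuine obstacle. The two points that need care are the sign in the identification $B=-k$ (which comes from $\prod(1-q^n)^k = q^{-k/24}\eta^k(\tau)$, not $q^{k/24}\eta^k(\tau)$) and the harmless replacement of $\gcd(d,6Nm)=1$ by $\gcd(d,6km)=1$; both are routine.
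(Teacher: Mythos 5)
Your proposal is correct and is exactly the paper's (implicit) argument: the paper simply observes that $f_k(\tau)=q^{-k/24}\sum c\phi_k(n)q^n\in\mathcal{S}(-k,(k-1)/2,N,\chi_k)$ with $N=k$ or $2k$ and applies Theorem \ref{mainthm}. Your additional checks (the vanishing of $c\phi_k$ on negative integers, the sign $B=-k$, and the equivalence of $\gcd(d,6Nm)=1$ with $\gcd(d,6km)=1$) are all accurate and fill in the routine details the paper leaves unstated.
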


Let us pause for a moment to consider a very specific example which illustrates how this corollary compares to similar results from the literature.
\begin{example}
We now determine which values of $t$ allow us to conclude
\begin{equation}\label{example}\sum c\phi_3(10n+t)q^n \not\equiv 0\pmod{5}.\end{equation}
A result of Ahlgren and Kim \cite[Theorem 1.3]{AhlgrenKim15} does apply to 3-colored generalized Frobenius partitions but not to congruences modulo 5. Results of L\"obrich \cite{Loebrich17} do not apply here since the generating function for $c\phi_3(n)$ is not a generalized eta-quotient.  From work of Andersen \cite[Theorem 5.1]{Andersen14}, it follows that \eqref{example} holds for $t$ such that $\leg{1-8t}{5}=1,$ i.e., 
\[t \equiv 0, 4, 5, \text{or }9 \pmod{10}.\]
From Corollary \ref{cphikthm} (with $t_0 = 0, 1$), it follows that \eqref{example} holds for
\[t \equiv 0, 1, 3, 4, 5, 6, 8, \text{or }9 \pmod{10}.\]
Thus, we have shown that \eqref{example} must hold for all $t$ except $t \equiv 2$ or $7 \pmod{10}$  (and some quick calculations can also verify that there are no congruences at all of the form $\sum c\phi_3(10n+t)q^n \equiv 0\pmod{5}$).
\end{example}

We now study cases for which we can prove incongruences for all but one or two congruence classes modulo $m$.

\subsection{Congruences of the form $\sum c\phi_k(\ell n+t)q^n \equiv 0 \pmod{\ell}$} First, we consider Ramanujan congruences for generalized Frobenius partitions, i.e., congruences where $m=\ell.$  When $k=1,$ we have $c\phi_1(n) = p(n)$, and so we have only the Ramanujan congruences for the partition function given above.  For $k=2,$ M. Dewar proved that the only Ramanujan congruences are
\begin{align*}
c\phi_2(2n+1) \equiv 0\pmod{2}\\
c\phi_2(5n+3) \equiv 0\pmod{5},
\end{align*}
and several other examples of Ramanujan congruences for $k\geq 3$ can be found in the literature.  In general, work of D. Choi implies for a fixed $k$ there are only finitely many Ramanujan congruences \cite[Section 1.1.2]{Choi16}.

Since one can show that $c\phi_k(0) = 1$ and $c\phi_k(1) = k^2,$ Corollary \ref{cortomainthm} gives the following analogue to the theorem of Kiming and Olsson \cite{KimingOlsson92} for primes $\ell$ relatively prime to $6k$ such that $\leg{k(k-24)}{\ell}=-1$. 

\begin{corollary}\label{Cor8}
For primes $\ell \geq 5$ such that $\ell \nmid k,$ if $\leg{k(k-24)}{\ell}=-1$ then
\[\sum c\phi_k(\ell n+t)q^n\not\equiv 0\pmod{\ell}\]
for all $t$ except possibly $t \equiv k(1-\ell^2)/24 \pmod{\ell}$.
\end{corollary}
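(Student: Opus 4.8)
The plan is to derive Corollary \ref{Cor8} as a direct specialization of Corollary \ref{cortomainthm}, applied to the modular object $f_k(\tau) = q^{-k/24}\sum_{n\ge 0} c\phi_k(n)q^n \in \mathcal{S}(-k,(k-1)/2,N,\chi_k)$, where $N = k$ or $2k$. In the notation of Corollary \ref{cortomainthm} we have $B = -k$ and $a(n) = c\phi_k(n)$, and we take $m = \ell$. First I would record the two input facts about the Fourier coefficients: $c\phi_k(0) = 1$ and $c\phi_k(1) = k^2$. These are standard for generalized Frobenius partitions (the constant term comes from the numerator's contribution at $m_1 = \cdots = m_{k-1} = 0$ and the denominator expansion, and $c\phi_k(1) = k^2$ is classical), and they are already asserted in the text preceding the corollary.

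Next I would feed these into Corollary \ref{cortomainthm}. Take $n_1 = 0$, so that $a(m n_1) = a(0) = c\phi_k(0) = 1$; since $\ell \nmid 1$, the hypothesis $\ell \nmid a(\ell n_1)$ holds automatically. Take $n_2 = 0$ as well, so $a(mn_2 + 1) = a(1) = c\phi_k(1) = k^2$; the hypothesis $\ell \nmid a(\ell n_2 + 1) = k^2$ is exactly the assumption $\ell \nmid k$ in the statement. The final sentence of Corollary \ref{cortomainthm} requires $\gcd(\ell, 6Nm) = \gcd(\ell,6N\ell)$ considerations — more precisely, it is phrased for primes $\ell \ge 5$ with $m = \ell$ and needs $\ell \nmid a(\ell n_1)$, $\ell \nmid a(\ell n_2+1)$, and $\leg{B(B+24)}{\ell} = -1$. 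Substituting $B = -k$ gives $B(B+24) = (-k)(-k+24) = k(k-24)$, so the Legendre-symbol hypothesis $\leg{k(k-24)}{\ell} = -1$ is precisely $\leg{B(B+24)}{\ell} = -1$. (One should also note that $\ell \ge 5$ together with $\ell \nmid k$ ensures $\leg{B(B+24)}{\ell}$ is a genuine $\pm 1$; if $\ell \mid k(k-24)$ the symbol would be $0$, but the hypothesis $=-1$ rules this out, and in particular forces $\ell \nmid k$ anyway, so the two hypotheses are consistent.)

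With all hypotheses of Corollary \ref{cortomainthm} verified, its conclusion gives $\sum a(\ell n + t)q^n \not\equiv 0 \pmod \ell$ for all $t$ except possibly $t \equiv B(\ell^2-1)/24 \pmod \ell$. Substituting $B = -k$ yields the exceptional class $t \equiv -k(\ell^2-1)/24 = k(1-\ell^2)/24 \pmod \ell$, which is exactly the value appearing in Corollary \ref{Cor8}. Since $a(n) = c\phi_k(n)$, this is the desired statement. The only genuinely non-mechanical point is checking that $24$ is invertible modulo $\ell$ so that the expression $k(1-\ell^2)/24$ makes sense mod $\ell$ — this is guaranteed by $\ell \ge 5$, so the reduction is clean.

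I do not anticipate a real obstacle: the content of Corollary \ref{Cor8} is entirely contained in Corollary \ref{cortomainthm}, and the proof is a bookkeeping exercise in substituting $B = -k$ and the known small values $c\phi_k(0) = 1$, $c\phi_k(1) = k^2$. If anything, the one place warranting a sentence of care is making sure the coprimality conditions on the auxiliary integer $d$ (namely $\gcd(d, 6Nm) = 1$ with $N = k$ or $2k$ and $m = \ell$) do not obstruct $d$ from ranging over all nonzero residues mod $\ell$ — but since $\ell$ is prime and we only need $d \not\equiv 0 \pmod \ell$ among the constraints at the prime $\ell$, while the constraints at primes dividing $6N$ can be satisfied by the Chinese Remainder Theorem independently of the residue mod $\ell$, this is exactly the argument already carried out in the proof of Corollary \ref{cortomainthm}, and nothing new is needed here.
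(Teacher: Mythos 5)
Your proposal is correct and matches the paper's own (very brief) justification: the paper derives Corollary \ref{Cor8} exactly as you do, by feeding $c\phi_k(0)=1$ and $c\phi_k(1)=k^2$ into Corollary \ref{cortomainthm} with $B=-k$ and $m=\ell$, so that $\leg{B(B+24)}{\ell}=\leg{k(k-24)}{\ell}$ and the exceptional class is $B(\ell^2-1)/24=k(1-\ell^2)/24$. Your bookkeeping, including the remark that the condition $\leg{k(k-24)}{\ell}=-1$ already forces $\ell\nmid k$, is sound and nothing further is needed.
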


For example, if $k=3$, then we see that Corollary \ref{Cor8} applies to all primes $\ell \equiv 3,5,6 \pmod{7}$.  Additionally, we note that this is consistent with the beautiful congruences of F. Garvan and J. Sellers \cite{GS} which state that for any non-negative integer $N$
\begin{align*}
c\phi_{5N+1}(5n+4) &\equiv 0\pmod{5}\\
c\phi_{7N+1}(7n+5) &\equiv 0\pmod{7}\\
c\phi_{11N+1}(11n+6) &\equiv 0\pmod{11}
\end{align*}
and in fact, Corollary \ref{Cor8} shows that these are the only Ramanujan congruences of that type.

\subsection{Congruences of the form $\sum c\phi_k(2\ell n+t)q^n \equiv 0 \pmod{\ell}$}  Next, we consider congruences where $m=2\ell$ to see how to extend the approach in the proof of Corollary \ref{cortomainthm}.  When $k$ is odd, we find that our proof is very similar to what happens in the $m=\ell$ case while $k$ even requires more information.  

Suppose $k$ is odd and suppose $\ell \nmid k$.  We apply Corollary \ref{cphikthm} with $t_0 = 0,1$.  Here we note that if $d$ is such that $\gcd(d,6\ell)=1$, then 
\begin{align*}
t_0d^2-k\frac{d^2-1}{24} \equiv t_0(d+12k\ell)^2-k\frac{(d+12k\ell)^2-1}{24} \pmod \ell;\\
t_0d^2-k\frac{d^2-1}{24} \not\equiv t_0(d+12k\ell)^2-k\frac{(d+12k\ell)^2-1}{24} \pmod{2}.
\end{align*} 
Hence, for each $t_0$, as we allow $d$ to run through each of the $\phi(2\ell) = \ell -1$ congruence classes modulo $2\ell$ (where $d$ is relatively prime to $2\ell,$ and we choose a representative $d$ so that $\gcd(d,3k)=1$), we obtain $(\ell-1)/2$ incongruent values of $t$.  For each $d$, we also consider $d+12k\ell$, obtaining another $(\ell-1)/2$ incongruent values of $t$ for a total of $\ell-1$ incongruent values of $t$ for each $t_0$. If $\leg{k(k-24)}{\ell} = -1$, then there is no overlap in the $t$ values we find using $t_0=0,1$ as
\[
\leg{24t-k}{\ell} = \leg{24t_0 -k}{\ell}.
\]  
This gives us a total of $2\ell-2$ values of $t$ that yield incongruences.  The two remaining $t$ values are those for which $\leg{24t-k}{\ell} = 0$.

Now suppose that $k$ is even, $\ell \geq 5$ is prime, and that $\ell$ does not divide any of the following:
\begin{align*}
c\phi_k(0) &= 1,\\
c\phi_k(1) &= k^2,\\
c\phi_k(2) &= 2k^2 + \binom{k}{2}^2 = \frac{k^2(k^2-2k+9)}{4},\\
c\phi_k(3) &= 3k^2 + 2k^2\binom{k}{2} + \binom{k}{3}^2 = \frac{k^2(k^4-6k^3+49k^2-48k+112)}{36}.
\end{align*}
For example, if $k=2$ then we consider all primes $\ell \geq 5,$ if $k=4$ then we consider all primes $\ell \geq 5$ with $\ell \neq 17$, and if $k=6$ then we consider all primes $\ell \geq 5$ with $\ell \neq 11,$ $\ell \neq 397$. 

We now apply Corollary \ref{cphikthm} with $t_0 = 0,2.$ As $d$ runs through each of the $\phi(2\ell) = \ell -1$ congruence classes modulo $2\ell$ (where $d$ is relatively prime to $2\ell,$ and we choose a representative $d$ so that $\gcd(d,3k)=1$), we obtain
\[t \equiv t_0d^2-k\frac{d^2-1}{24}\pmod{2\ell},\]
where $t\equiv t_0 \pmod{2}$ since $t_0$ and $k$ are even. If $\leg{k(k-48)}{\ell} = -1$ then it follows that $\sum c\phi_k(2\ell n+t)q^n \not\equiv 0\pmod{\ell}$ for $\ell-1$ incongruent values of $t$ (which are all even).
Similarly, we apply Corollary \ref{cphikthm} with $t_0 = 1,3$ and find that if $\leg{(24-k)(72-k)}{\ell} = -1$ then we have that $\sum c\phi_k(2\ell n+t)q^n \not\equiv 0\pmod{\ell}$ for $\ell-1$ incongruent values of $t$ (which are all odd). 
The two remaining $t$ values are those for which $\leg{24t-k}{\ell} = 0$.

This proves the following statement.

\begin{corollary}
For odd $k$ and primes $\ell \geq 5$ such that $\ell \nmid k$, if $\leg{k(k-24)}{\ell} = -1$ then
\[\sum c\phi_k(2\ell n+t)q^n\not\equiv 0\pmod{\ell}\]
for all $t$ except possibly $t \equiv k(1-\ell^2)/24$ or $t \equiv k(1-\ell^2)/24 + \ell \pmod{2\ell}$. 

For even $k$ and primes $\ell \geq 5$ such that $\ell \nmid k(k^2-2k+9)(k^4-6k^3+49k^2-48k+112),$ if $\leg{k(k-48)}{\ell} = \leg{(24-k)(72-k)}{\ell} = -1$ then
\[\sum c\phi_k(2\ell n+t)q^n\not\equiv 0\pmod{\ell}\]
for all $t$ except possibly $t \equiv k(1-\ell^2)/24$ or $t \equiv k(1-\ell^2)/24 + \ell \pmod{2\ell}$. 
\end{corollary}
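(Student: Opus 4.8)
The plan is to derive both statements from Corollary~\ref{cphikthm} with $m = 2\ell$, the only new ingredient being a careful count of the residue classes of $t$ modulo $2\ell$ that the conclusion produces.

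First I would fix the choices of $t_0$. For odd $k$ with $\ell\nmid k$ I take $t_0\in\{0,1\}$, using $c\phi_k(0)=1$ and $c\phi_k(1)=k^2$; for even $k$ I take $t_0\in\{0,1,2,3\}$, using the values $c\phi_k(0),\dots,c\phi_k(3)$ recorded above, so that the divisibility hypothesis on $\ell$ is exactly the statement that $\ell\nmid c\phi_k(t_0)$ in each case. (The formulas for $c\phi_k(2)$ and $c\phi_k(3)$ follow from a short expansion of the theta-series numerator in the generating function for $c\phi_k(n)$.)

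Next I would carry out the counting, working through the Chinese Remainder isomorphism $\Z/2\ell\Z\cong\Z/2\Z\times\Z/\ell\Z$. Writing $d^2-1=24j$ with $j\in\Z$ (valid since $\gcd(d,6)=1$), the target class is $t\equiv t_0d^2-kj\pmod{2\ell}$. Modulo $\ell$ this depends only on $d^2\bmod\ell$, and exactly as in the proof of Corollary~\ref{cortomainthm} it is equivalent to $\leg{24t-k}{\ell}=\leg{24t_0-k}{\ell}$; as $d$ ranges over residues prime to $\ell$ this realizes precisely the $(\ell-1)/2$ nonzero quadratic residues. Modulo $2$, if $k$ is even then $kj$ is even, so $t\equiv t_0\pmod 2$ is forced and a single $t_0$ yields $(\ell-1)/2$ classes, all of parity $t_0\bmod 2$; here one should also check that $t\bmod 2\ell$ genuinely depends only on $d\bmod 2\ell$, which holds because $k$ is even. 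If $k$ is odd, the substitution $d\mapsto d+12k\ell$ fixes $d^2\bmod\ell$ and preserves $\gcd(d,6k\ell)=1$ (note $3k\mid 12k\ell$), but changes $kj$ by an odd amount, hence shifts $t$ by $\ell$ modulo $2\ell$; so each quadratic-residue class mod $\ell$ is hit in both parities and a single $t_0$ yields $\ell-1$ classes.

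Finally I would assemble the pieces. For odd $k$, the classes from $t_0=0$ and $t_0=1$ are disjoint exactly when $\leg{-k}{\ell}\neq\leg{24-k}{\ell}$, i.e.\ $\leg{k(k-24)}{\ell}=-1$, giving $2\ell-2$ admissible classes of $t$. For even $k$, the classes from $t_0=0,2$ (all even) are disjoint when $\leg{k(k-48)}{\ell}=-1$, those from $t_0=1,3$ (all odd) are disjoint when $\leg{(24-k)(72-k)}{\ell}=-1$, and the even and odd families are automatically disjoint, again giving $2\ell-2$ classes. In both cases the only residues not covered are the two with $\ell\mid 24t-k$, and a direct check identifies these as $t\equiv k(1-\ell^2)/24$ and $t\equiv k(1-\ell^2)/24+\ell\pmod{2\ell}$, which are integers since $24\mid 1-\ell^2$ for $\ell$ coprime to $6$. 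I expect the main obstacle to be the bookkeeping in the counting step: in particular, verifying that for odd $k$ the shift $d\mapsto d+12k\ell$ really does toggle the parity of $t$ while staying inside the admissible set of $d$'s, and making precise the sense in which $t$ runs over the claimed classes (for even $k$, as $d$ ranges over unit classes mod $2\ell$; for odd $k$, over $d$ together with $d+12k\ell$).
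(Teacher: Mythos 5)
Your proposal is correct and follows essentially the same route as the paper: apply Corollary~\ref{cphikthm} with $m=2\ell$ and $t_0\in\{0,1\}$ (odd $k$) or $t_0\in\{0,1,2,3\}$ (even $k$), count classes via the splitting mod $2$ and mod $\ell$, use the shift $d\mapsto d+12k\ell$ to toggle parity in the odd-$k$ case, and invoke the stated Legendre-symbol conditions to ensure the families are disjoint, leaving only the two classes with $\ell\mid 24t-k$. No substantive differences from the paper's argument.
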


\subsection{Searching for new congruences} Corollary \ref{cphikthm} can be used to describe where congruences are prohibited; this allows us to search for congruences in the remaining arithmetic progressions more efficiently.  For example, numerical data for $c\phi_5(n)$ (using, for example, Theorem 3.1 of \cite{ChanWangYang19}) suggests the following newly discovered congruences.
\begin{theorem}
\[c\phi_5(325n+t) \equiv 0 \pmod{13} \text{ for }t = 15, 25, 50, 75, 90, 100, 115, 140, 165, 175, 240, 275.\]
\end{theorem}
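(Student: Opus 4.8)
The plan is to reduce the twelve congruences to a single one by means of the symmetry already present in Proposition \ref{prop52}, to recognize that single congruence as the vanishing modulo $13$ of an explicit holomorphic modular form, and to settle it by the theorem of Sturm after a finite computation. Here $m=325$, $\ell=13$, and $f_5(\tau)=q^{-5/24}\sum_n c\phi_5(n)q^n\in\mathcal S(-5,2,5,\chi_5)$, so $B=-5$, $N=5$, $NN_m=1625$, and $6Nm=9750$. I would first observe that the twelve residues $15,25,50,75,90,100,115,140,165,175,240,275$ modulo $325$ form a single orbit of the map $t_0\mapsto t_0d^2+B\tfrac{d^2-1}{24}\pmod m$ as $d$ runs over integers coprime to $9750$: since $24$ is invertible modulo $325$ this map depends only on $d^2\bmod 325$, which takes the $60$ square values in $(\Z/325\Z)^\times$, and a direct computation starting from $t_0=15$ identifies the orbit with $\{t\bmod 325:\ t\equiv 0\ \text{or}\ 15\!\!\pmod{25},\ \leg{24t-5}{13}=1\}$ — precisely the twelve values listed. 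Consequently, exactly as in the proof of Theorem \ref{mainthm}: if $f_{325,15}\equiv 0\pmod{13}$, then for each $t$ in the orbit we choose $A=(\begin{smallmatrix}a&b\\ c&d\end{smallmatrix})\in\Gamma_0(1625)$ with $\gcd(a,6)=1$ and $15a^2+B\tfrac{a^2-1}{24}\equiv t\pmod{325}$, so that $f_{325,15}^{\,24mN}|_\kappa A=f_{325,t}^{\,24mN}$ by Proposition \ref{prop52}; since $\big(\tfrac1{13}f_{325,15}\big)^{24mN}\Delta^{s}\in M_{\kappa+12s}(\Gamma_1(1625))$ then has integral coefficients, Theorem \ref{qexpprinc} forces $13^{-24mN}f_{325,t}^{\,24mN}\Delta^{s}$ to be $13$-integral as well, whence $f_{325,t}\equiv 0\pmod{13}$, i.e. $c\phi_5(325n+t)\equiv 0\pmod{13}$ for all $n$. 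So it suffices to prove $c\phi_5(325n+15)\equiv 0\pmod{13}$ for every $n\ge 0$.

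For that, I would apply Proposition \ref{prop52} with $(m,t)=(325,15)$ directly: there is $s\in\N$ with $F:=\Delta^{s}f_{325,15}^{\,24mN}\in M_{\kappa+12s}(\Gamma_1(1625))$, where $\kappa=24mN(2-\tfrac52)=-19500$ and $24mN=39000$. Taking $s$ large enough that $w:=\kappa+12s>0$ and that $F$ is holomorphic, $F$ has $13$-integral coefficients and $q$-expansion $c\phi_5(15)^{39000}q^{1775+s}(1+O(q))$; more to the point, were $n_\ast$ the least $n$ with $13\nmid c\phi_5(325n+15)$, the first coefficient of $F$ not divisible by $13$ would occur in degree $1775+s+39000\,n_\ast$. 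By Sturm's bound for $\Gamma_1(1625)$, $F\equiv 0\pmod{13}$ as soon as the coefficients of $F$ through degree $\frac{w}{12}[\mathrm{SL}_2(\Z):\Gamma_1(1625)]$ vanish modulo $13$; so it is enough to check $c\phi_5(325n+15)\equiv 0\pmod{13}$ for $n$ in an explicit finite range, which I would carry out using the formula of Chan--Wang--Yang \cite[Theorem~3.1]{ChanWangYang19}.

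The hard part is keeping this computation small, i.e. pinning down a small admissible $s$ — equivalently, bounding the orders of the poles of $f_{325,15}^{\,39000}$ at the cusps of $\Gamma_1(1625)$ — finely enough that the Sturm bound, and with it the range of $n$, stays manageable; $[\mathrm{SL}_2(\Z):\Gamma_1(1625)]$ is already large, so a crude bound on $s$ would leave a finite but possibly unpleasant computation. A much smaller one is obtained by first reducing modulo $13$: from $\eta(\tau)^{13}\equiv\eta(13\tau)\pmod{13}$ one gets the power-series congruence $f_5\equiv\eta(\tau)^8\,\Theta_5(\tau)/\eta(13\tau)\pmod{13}$, with $\Theta_5(\tau)=\sum_{m_1,\dots,m_4\in\Z}q^{Q(m_1,\dots,m_4)}\in M_2^!(\Gamma_0(5),\chi_5)$ as in \eqref{thetafcn}, and this realizes the relevant $325$-dissection, after the usual change of variable and multiplication by a small power of $\Delta$ to clear the cusps, as a holomorphic modular form of far smaller weight and level than $F$ — at which point the finite verification is routine.
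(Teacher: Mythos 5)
Your first step (the orbit reduction) is sound and is a reorganization the paper does not use: you run Proposition \ref{prop52} and the $q$-expansion principle ``backwards'' to show that a congruence at $t=15$ would propagate to the whole orbit $\{t \bmod 325 : t\equiv 0 \text{ or } 15 \pmod{25},\ \leg{24t-5}{13}=1\}$, which is indeed the listed set of twelve residues. But note that the paper never singles out one class; it isolates all twelve simultaneously, and that choice is not cosmetic --- it is exactly what makes the final computation feasible, and it is where your proposal has a genuine gap.

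The gap is in your last paragraph. You correctly identify the key congruence $\eta(13\tau)\equiv\eta(\tau)^{13}\pmod{13}$, giving $f_5\equiv\eta^8(\tau)\Theta_5(\tau)/\eta(13\tau)\pmod{13}$, but you then assert that ``the relevant $325$-dissection'' can be realized, ``after the usual change of variable and multiplication by a small power of $\Delta$,'' as a holomorphic form of small weight and level, with a routine finite check. No such construction is supplied, and the obvious candidates fail: extracting the single class $t\equiv 15\pmod{325}$ via $f_{m,t}$ forces the $24mN=39000$-th power and returns you to the enormous form $F$ you were trying to avoid (weight in the tens of thousands on $\Gamma_1(1625)$, so a Sturm bound far beyond ``unpleasant''), while isolating a single residue class by twisting requires all characters modulo $325$ and inflates the level to roughly $325^3$. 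What actually works --- and what the paper does --- is to multiply $\Theta_5(\tau)\,\eta^8(\tau)/\eta(13\tau)$ by the extra factor $\eta^{17}(325\tau)$ to land in $M_{14}(\Gamma_0(325),\chi)$ (this factor is $\equiv q^{\ast}\prod(1-q^{325n})^{17}\pmod{13}$, so it does not mix residue classes modulo $325$), apply $U(5)$, and then twist by the two quadratic characters $\leg{\cdot}{5}$ and $\leg{\cdot}{13}$. Quadratic twisting isolates precisely the union of the twelve classes (which is why one should \emph{not} reduce to a single class), keeps the weight at $14$, and raises the level only to $5^4\cdot 13^3=1373125$, giving the explicit Sturm bound of $2070251$ coefficients that makes the verification genuinely finite and feasible. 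Without this (or an equivalent) construction, your ``routine finite verification'' is the entire content of the theorem and remains unproved.
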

We can prove these using standard facts from the theory of modular forms.

\begin{proof}
By equation \eqref{thetafcn} together with well-known results about eta-quotients (see, for example, Theorems 1.64 and 1.65 of \cite{Ono04}), we first have that
\[\sum_n c\phi_5(n)q^n \prod_n (1-q^n)^5 \frac{\eta^8(\tau)}{\eta(13\tau)} \eta^{17}(325\tau) \in M_{14}(\Gamma_0(325),\chi)\]
for some character $\chi.$ Thus we set
\[g(\tau) = \sum_n a(n)q^n := \left(\sum_n c\phi_5(n)q^n \prod_n (1-q^n)^5 \frac{\eta^8(\tau)}{\eta(13\tau)} \eta^{17}(325\tau) \right) \mid U(5) \in M_{14}(\Gamma_0(325),\chi)\]
(see, for example, Section 2.4 of \cite{Ono04}) and note that
\[g(\tau) \equiv \sum_n c\phi_5(5n-230)q^{n} \prod_n (1-q^{65n})^{17} \pmod{13}.\]
Now, by the theory of quadratic twists (see, for example, Section 2.2 of \cite{Ono04}), it follows that
\[\sum_{\leg{n}{5} = \leg{n}{13} = 1} a(n)q^n \in M_{14}(\Gamma_0(1373125),\chi).\]
By a theorem of Sturm, one can prove that
\[\sum_{\leg{n}{5} = \leg{n}{13} = 1} a(n)q^n \equiv 0\pmod{13}\]
by checking that the first 2070251 coefficients of this form vanish modulo 13. It follows that
\[\sum_{\leg{n}{5} = \leg{n}{13} = 1} c\phi_5(5n-230)q^{n} \equiv 0\pmod{13},\]
completing the proof.
\end{proof}

%%%%%%%%%%%%%%%%%%%%%%%%%%%%%%%%%%%%%%%%%%%%%%%%%%%%

\section{Mock Theta Functions} \label{mock}
Let 
\[
f(q) = 1+\sum_{n=1}^\infty \frac{q^{n^2}}{(1+q)^2(1+q^2)^2\dots(1+q^n)^2} =:\sum_{n=0}^\infty a_f(n)q^n
\]
and
\[
\omega(q) = 1+\sum_{n=1}^\infty \frac{q^{2n^2+2n}}{(1+q)^2(1+q^3)^2\dots(1+q^{2n+1})^2} =:\sum_{n=0}^\infty a_{\omega}(n)q^n
\]
be two of Ramanujan's classical third order mock theta functions.  Both $a_f(n)$ and $a_{\omega}(n)$ have partition theoretic interpretations. (See for example \cite{Andrews07}.)  

In \cite{GarthwaitePenniston08} the first author and D. Penniston proved that such functions have an infinite number of non-nested congruences similar to the classical partition function (see \cite{Ono00}, \cite{AhlgrenOno01}) and, more generally, weakly holomorphic modular forms (see \cite{Treneer06}).  In particular \cite[Corollary 4.3]{GarthwaitePenniston08}, for any prime $p\geq 5$, any prime $\ell \nmid 6p$ and any $j\geq 1$, there exists a
positive integer $m$ such that a positive proportion of the primes $Q$ have the property that 
\[
a_\omega\left(\frac{Q^3\ell^mn-2}{3}\right) \equiv 0 \pmod{\ell^j}
\]
for all $n$ with $\gcd(n,\ell Q) =1$ and $\leg{-Q^3\ell^mn}{p}=-1$.  M. Waldherr \cite[Theorem 1.1]{Waldherr11} gave the first concrete example of a congruence for this function, proving
\[ 
a_\omega(40n+27) \equiv a_\omega(40n+35) \equiv 0 \pmod{5}.
\] %This is Thm 1.1

Ahlgren and Kim \cite[Theorems 1.1 and 1.2]{AhlgrenKim15} proved that 
\[
\sum a_f(mn+t)q^n \not\equiv 0 \pmod{3}
\]
and 
\[
\sum a_\omega(mn+t)q^n \not\equiv 0 \pmod{3}
\]
for any positive integer $m$ and any integer $t$ by adapting Radu's work for the classical partition function \cite{Radu12} to these mock theta functions.  

Andersen \cite{Andersen14} built on the work of Ahlgren and Kim to prove the following results. 

\begin{theorem}[Theorems 1.1 and 1.3 of \cite{Andersen14}] Suppose  $\ell \geq 5$ is prime. If for all $n$ we have $a_f(mn+t) \equiv 0 \pmod{\ell}$ then $\ell \mid m$ and $\leg{24t-1}{\ell} \neq \leg{-1}{\ell}$.  Similarly, if $\ell \geq 5$ is prime and if for all $n$ we have $a_\omega(mn+t) \equiv 0 \pmod{\ell}$ then $\ell \mid m$ and $\leg{3t+2}{\ell} \neq \leg{-1}{\ell}$. 
\end{theorem}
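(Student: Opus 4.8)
The plan is to reduce to Theorem \ref{mainthm} by exploiting the mock modular structure of $f(q)$ and $\omega(q)$. By work of Watson and Zwegers, $\mathcal F(\tau):=q^{-1/24}f(q)$ — and the corresponding normalization attached to $\omega(q)$ — is the holomorphic part of a harmonic Maass form of weight $\tfrac12$ whose shadow is a weight-$\tfrac32$ unary theta series. The decisive structural fact is that the non-holomorphic part of the completion $\widehat{\mathcal F}$ is supported on $q$-powers $q^{-N^2/24}$ with $\gcd(N,6)=1$; writing $24n-1$ for the ``discriminant'' attached to $a_f(n)$ (and $3n+2$ in place of $24n-1$ for $\omega$), the discriminants occurring in the shadow all lie in the residue class of $\leg{-1}{\ell}$ or of $0$ modulo $\ell$. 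I would first record this together with the transformation law for the sieved functions $\mathcal F_{m,t}$, which is obtained exactly as Proposition \ref{prop52} (this is already set up in \cite{AhlgrenKim15}, where the case $\ell=3$ is treated).

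I would then argue the contrapositive: suppose $\sum a_f(mn+t)q^n\equiv 0\pmod\ell$, i.e.\ $a_f(k)\equiv 0\pmod\ell$ whenever $k\equiv t\pmod m$; if $\leg{24t-1}{\ell}\ne\leg{-1}{\ell}$ there is nothing to prove, so assume $\leg{24t-1}{\ell}=\leg{-1}{\ell}$. Since a nonzero perfect square is never a quadratic nonresidue, restricting the $q$-expansion of $\mathcal F$ to those coefficients whose discriminant is a nonresidue modulo $\ell$ — a quadratic twist, which only enlarges the level — annihilates the entire non-holomorphic part and yields a genuine weakly holomorphic modular form of weight $\tfrac12$ with integral coefficients, hence (after writing it as $\eta^{-1}$ times a weakly holomorphic form of weight $1$) an element of $\mathcal S(-1,1,N',\chi')$. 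Theorem \ref{mainthm}, with the $q$-expansion principle (Theorem \ref{qexpprinc}) inserted as in its proof, then forces nonvanishing modulo $\ell$ of the coefficients of this form on a large family of residue classes modulo $m$; to reach the class $t+m\Z$ one feeds in the nonvanishing base coefficients $a_f(0)=a_f(1)=1$ and the classical transformation identities for $f$ and $\omega$ (which express a fixed $\Z$-linear combination of $f$ and $\omega$ at related arguments as an eta-quotient), allowing one to pass between the two functions and fill in the remaining progressions. The necessity of $\ell\mid m$ drops out of the same analysis: when $\ell\nmid m$ the discriminants $24(mn+t)-1$ run through every residue class modulo $\ell$ along $t+m\Z$, so the progression meets a class already forced to carry a nonzero coefficient, contradicting the hypothesis. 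The argument for $\omega$ is identical with $24t-1$ replaced by $3t+2$ throughout.

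The main obstacle is precisely the middle step: manufacturing, modulo $\ell$, a weakly holomorphic modular form that agrees with the sieved mock theta function on the arithmetic progressions where incongruences are asserted — exactly the range in which the sieved function is genuinely non-holomorphic, so that the naive twist above discards the coefficients one cares about. Handling this is the technical heart of \cite{Andersen14}: it leans on Zwegers' explicit completion, on the $\ell$-adic behavior of mock modular forms with CM shadow, and on the modular identities linking $f$ and $\omega$, none of which is part of the classical modular-forms toolkit used for Theorem \ref{mainthm}. Once such a form is in hand, the rest is a bookkeeping exercise with quadratic residues of exactly the kind carried out in the proof of Corollary \ref{cortomainthm} (noting that, as there, the map $d\mapsto -d^2$ sweeps out precisely the residues $x$ with $\leg{x}{\ell}=\leg{-1}{\ell}$, which is the complement of the exceptional set).
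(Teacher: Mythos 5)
First, note that the paper does not prove this statement at all: it is imported verbatim as Theorems 1.1 and 1.3 of \cite{Andersen14} and used as a black box, so there is no internal argument to compare yours against. Judged on its own terms, your sketch gets the structural facts right (the shadow of $q^{-1/24}f(q)$ is a unary theta series supported on exponents $-N^2/24$ with $\gcd(N,6)=1$, so the non-holomorphic part of the completion only meets discriminants $24n-1\equiv -N^2$, i.e.\ classes with $\leg{24n-1}{\ell}\in\{0,\leg{-1}{\ell}\}$), but it is not a proof: you yourself flag that the middle step is open, and that step is the entire content of the theorem beyond what this paper's Theorem \ref{mock.cong} already delivers.

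Concretely, the gap is the case in which the sieved function $H_{f,m,t}$ is genuinely mock, which is exactly the case $\leg{24t-1}{\ell}=\leg{-1}{\ell}$ (e.g.\ $m=\ell$), since then no prime $p\mid m$ satisfies $\leg{1-24t}{p}=-1$ and the progression $mn+t$ meets the support of the non-holomorphic part. Your proposed repair fails here: the quadratic twist that annihilates the shadow keeps only the coefficients with $\leg{24n-1}{\ell}=-\leg{-1}{\ell}$, hence discards every coefficient $a_f(mn+t)$ in the progression under consideration, so Theorem \ref{mainthm} applied to the twisted form says nothing about $\sum a_f(mn+t)q^n$. The classical identities relating $f$ and $\omega$ are transformation laws relating their values at $\Gamma_0(2)$-equivalent points (up to a Mordell-integral correction); they do not transport coefficient information between complementary arithmetic progressions, so they cannot ``fill in'' the discarded classes. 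Even your argument for the necessity of $\ell\mid m$ is circular as written (``the progression meets a class already forced to carry a nonzero coefficient'' presupposes a nonvanishing statement that still requires locating a base coefficient $t_0$ with $\ell\nmid a_f(t_0)$ in the correct square class, as in Corollary \ref{mock.Ramanujan}). Andersen's actual treatment of the mock case requires new input — controlling the non-holomorphic part of the completed harmonic Maass form modulo $\ell$ in the spirit of Ahlgren--Boylan — and none of that is supplied or replaced by the machinery of Theorem \ref{mainthm} and Theorem \ref{qexpprinc}.
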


Thus, for primes $\ell\geq 5$, it follows that about half of the possible $t$ values lead to incongruences.  We now prove additional incongruences.

\begin{theorem}\label{mock.cong} Suppose  $\ell \geq 5$ is prime.  Let $m\in \mathbb{Z}^+$ and $t_{0,f}\in \{0, \dots, m-1\}$.  If $\leg{1-24t_{0,f}}{p} = -1$ for some $p\mid m$ and $\ell \nmid a_f(t_{0,f})$
then for all $t \in \{0,\dots,m-1\}$ such that 
\[
t \equiv t_{0,f}d^2+\frac{1-d^2}{24}\pmod{m}.
\]
for some integer $d$ with $\gcd(d,6Nm)=1$, we have that
\[\sum a_f(mn+t)q^n\not\equiv 0\pmod{\ell}.\]

Similarly, let $t_{0,\omega}\in\{0,\dots, m-1\}$.  If $\leg{-3t_{0,\omega}-2}{p} = -1$ for some $p\mid m$ and $\ell \nmid a_{\omega}(t_{0,\omega})$
then for all $t \in \{0,\dots,m-1\}$ such that 
\[
t \equiv t_{0,\omega}d^2+\frac{2}{3}(d^2-1)\pmod{m}.
\]
for some integer $d$ with $\gcd(d,6Nm)=1$, we have that
\[\sum a_\omega(mn+t)q^n\not\equiv 0\pmod{\ell}.\]
\end{theorem}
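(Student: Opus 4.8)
The plan is to feed $f(q)$ and $\omega(q)$ into the argument of Section \ref{proof} essentially unchanged; the one genuine issue is that these are mock theta functions, so that they lie in no space $\mathcal{S}(B,k,N,\chi)$ and neither Proposition \ref{prop52} nor the $q$-expansion principle applies to them on the nose. I would handle this exactly as Ahlgren--Kim and Andersen do in \cite{AhlgrenKim15, Andersen14}: via Zwegers' completions of the third order mock theta functions, $q^{-1/24}f(q)$ and the normalization $q^{16/24}\omega(q)=q^{2/3}\omega(q)$ are the holomorphic parts of harmonic Maass forms of weight $\tfrac12$ and some level $N$ whose shadows are unary theta functions of weight $\tfrac32$. (The power $q^{16/24}$ is the right one to attach to $\omega$ because $16\cdot\frac{d^2-1}{24}=\frac23(d^2-1)$, mirroring $B=16$ in the statement, just as $B=-1$ is attached to $f$.) The point extracted from this analysis in \cite{Andersen14} is that, on an arithmetic progression $mn+t$ for which $\leg{1-24t}{p}=-1$ for some prime $p\mid m$ (resp.\ $\leg{-3t-2}{p}=-1$ for $\omega$), the non-holomorphic part of the completion contributes nothing, so that the sieved object is a genuine weakly holomorphic modular form.

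First I would verify that these Legendre conditions propagate along the $t$-orbit, just as the identity $\leg{24t+B}{\ell}=\leg{24t_0+B}{\ell}$ is used in the proof of Corollary \ref{cortomainthm}. Indeed, if $t\equiv t_{0,f}d^2+\frac{1-d^2}{24}\pmod m$ with $\gcd(d,6Nm)=1$, then (multiplying through by $24$) $24t-1\equiv(24t_{0,f}-1)d^2\pmod m$, and any prime $p\mid m$ witnessing $\leg{1-24t_{0,f}}{p}=-1$ is automatically coprime to $d$, so $\leg{1-24t}{p}=\leg{1-24t_{0,f}}{p}=-1$ as well. The same computation, via $3t+2\equiv(3t_{0,\omega}+2)d^2\pmod m$, handles the $\omega$-hypothesis $\leg{-3t_{0,\omega}-2}{p}=-1$ (in both cases the witness $p$ is necessarily $\geq 5$, since the symbols at $p=2,3$ cannot equal $-1$). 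Hence every $t$ appearing in the conclusion lies on a progression of the type just described, and for each such $t$ the sieved function $f_{m,t}$ --- defined by the formula of Section \ref{prelim} with $B=-1$ (resp.\ $\omega_{m,t}$ with $B=16$) --- is, after the normalization above, a weakly holomorphic modular form transforming under $\Gamma_0(NN_m)$ exactly as in Proposition \ref{prop52}; equivalently it agrees with $g_{m,t}$ for some $g\in\mathcal{S}(-1,k,N,\chi)$ (resp.\ $\mathcal{S}(16,k,N,\chi)$), by the analysis of \cite{AhlgrenKim15, Andersen14}.

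With that in hand I would rerun the proof of Section \ref{proof} verbatim. Choose $A=(\begin{smallmatrix}a&b\\c&d\end{smallmatrix})\in\Gamma_0(NN_m)$ with $\gcd(a,6)=1$ and $t_A\equiv t_{0,f}\pmod m$ --- legitimate since $t_{0,f}$ itself lies in the orbit, taking $d=1$, so the progression $mn+t_{0,f}$ again satisfies the relevant non-residue condition --- apply Proposition \ref{prop52} and the $q$-expansion principle, and observe that if $\sum a_f(mn+t)q^n\equiv 0\pmod\ell$ then the leading coefficient of $\bigl(\ell^{-1}f_{m,t}\bigr)^{24mN}\Delta^s\,|_{\kappa+12s}A$ would be $\bigl(\ell^{-1}a_f(t_{0,f})\bigr)^{24mN}$, which cannot lie in $\Z[\zeta_{NN_m}]$ because $\ell\nmid a_f(t_{0,f})$ --- a contradiction. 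The hypothesis of Theorem \ref{mainthm} demanding $a(n)=0$ for $n<t_0$ with $n\equiv t_0\pmod m$ is automatic here, since $a_f(n)=a_\omega(n)=0$ for $n<0$ and $0\le t_0\le m-1$, and the $\omega$ case is word-for-word the same with $B=16$ and $t_{0,\omega}$ in place of $t_{0,f}$.

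The step I expect to be the main obstacle is the one imported from \cite{AhlgrenKim15, Andersen14}: that the sieved functions really are weakly holomorphic modular forms of the precise shape Proposition \ref{prop52} requires. This rests on the explicit harmonic Maass form completions of $f(q)$ and $\omega(q)$ --- their levels, multiplier systems, and the exact supports of their shadows --- together with some care about normalizations (the choice $B=16$ for $\omega$ and the various $q$-power conventions). Granting that input, the only genuinely new content of the theorem is the observation that, once one is on a progression satisfying the relevant non-residue condition, these two mock theta functions can be substituted directly into the machine of Theorem \ref{mainthm}.
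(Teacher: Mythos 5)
Your proposal is correct and follows essentially the same route as the paper: both verify that the non-residue condition propagates along the orbit $24t-1\equiv(24t_{0,f}-1)d^2\pmod{m}$ (resp.\ $3t+2\equiv(3t_{0,\omega}+2)d^2$), invoke the Ahlgren--Kim/Andersen transformation laws for the sieved holomorphic parts $H_{f,m,t}$ and $H_{\omega,m,t}$ in place of Proposition \ref{prop52}, and then rerun the $q$-expansion-principle contradiction from the proof of Theorem \ref{mainthm} with leading coefficient $a_f(t_{0,f})^{24m}$ (resp.\ $a_\omega(t_{0,\omega})^{24m}$).
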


These mock theta congruence and incongruence results all make use of the fact that $f(q)$ and $\omega(q)$ inherit their automorphic properties from the holomorphic parts of a vector-valued harmonic Mass form (see for example \cite{Zwegers01}).    To follow the method used in the proof of Theorem \ref{mainthm}, we want to consider pairs $(m,t)$ for which
\begin{equation}
\label{Hf}
H_{f,m,t} := q^{\frac{t-1/24}{m}}\sum a_f(mn+t)q^n
\end{equation}
and 
\begin{equation}
\label{Hom}
H_{\omega, m,t} := q^{\frac{t+2/3}{m}}\sum a_\omega(mn+t)q^n
\end{equation}
are weakly homolorphic modular forms.  For $H_{f,m,t}$ this happens whenever $mn-t \neq k(3k+1)/2$ for any $k,n \in \Z$, as the coefficients of the non-holomorphic part of the corresponding harmonic weak Maass form are zero on such progressions. It follows that $H_{f,m,t}$ is a weakly holomorphic modular form if $\leg{1-24t}{p}=-1$ for some prime $p\mid m.$  Similarly, for $H_{\omega,m,t}$ we want $mn-t-1 \neq 3k^2+2k$ for any $k,n \in \Z$, and it follows that $H_{\omega,m,t}$ is a weakly holomorphic modular form if $\leg{-3t-2}{p}=-1$ for some prime $p\mid m.$

\begin{proof}[Proof of Theorem \ref{mock.cong}]
Consider $t \in \{0,\dots, m-1\}$ and suppose there is a $t_{0,f} \in \{0, \dots, m-1\}$ and $d$ with $\gcd(d, 6Nm)=1$ for which 
\[
t \equiv t_{0,f}d^2+\frac{1-d^2}{24}\pmod{m}.
\]
Suppose there is a prime $p \mid m$ for which $\leg{24t_{0,f}-1}{p} = -1$, then 
\[
\leg{1-24t}{p} = \leg{1-24t_{0,f}}{p} = -1.
\]
 Let
\[N_{f,m} := 2m, 8m, 6m, \text{or } 24m\]
according to whether $\gcd(m,6)=1, 2, 3, \text{or } 6.$  Then, as in the proof of Theorem 1.1 of \cite{AhlgrenKim15},  $H_{f,m,t}$ as defined in \eqref{Hf}
satisfies the following transformation property \cite[Proposition 3.2]{AhlgrenKim15}: for every $A = (\begin{smallmatrix}a&b\\ c&d\end{smallmatrix}) \in \Gamma_0(N_{f,m})$ with $3 \nmid a$ we have
\[H_{f,m,t}^{24m} \mid_{12m} A = H_{f,m,t_{A,f}}^{24m}.\]
Then
\begin{align*}
(H_{f,m,t}^{24m} \mid_{12m} A)(\tau) &= H_{f,m,t_{A,f}}^{24m}(\tau)\\
%&= \left[q^{(t_{A,f}-1/24)/m}\sum_{n=0}^\infty a_f(mn+t_{A,f}) q^m\right]^{24m}\\
&= a_f(t_{A,f})^{24m}q^{24t_{A,f}-1}(1 + O(q)).
\end{align*}
We now follow a similar argument as in the proof of Theorem \ref{mainthm} to prove the statement of the theorem for $f$. 

The statement of the theorem for $\omega$ follows in the same way.  As in the proof of Theorem 1.2 of \cite{AhlgrenKim15}, if we consider $t$ satisfying the hypotheses of Theorem \ref{mock.cong} then $H_{\omega, m,t}$ as defined in \eqref{Hom}
satisfies the following transformation property \cite[Proposition 4.1]{AhlgrenKim15}: for every $A = (\begin{smallmatrix}a&b\\ c&d\end{smallmatrix}) \in \Gamma_0(2N_{f,m})$ with $3 \nmid a$ we have
\[H_{\omega,m,t}^{24m}|_{12m} A = H_{\omega,m,t_{A,\omega}}^{24m}.\]
Then
\begin{align*}
(H_{\omega,m,t}^{24m}|_{12m} A)(\tau) &= H_{\omega,m,t_{A,\omega}}^{24m}(\tau)\\
%&= \left[q^{(t_{A,\omega}+2/3)/m}\sum_{n=0}^\infty a_\omega(mn+t_{A,\omega}) q^m\right]^{24m}\\
&= a_\omega(t_{A,\omega})^{24m}q^{24t_{A,\omega}+16}(1 + O(q)),
\end{align*}
and we again apply the line of argument in the proof of Theorem \ref{mainthm} to complete the proof of Theorem \ref{mock.cong}.
\end{proof}

In particular, we can prove the following result.

\begin{corollary}\label{mock.Ramanujan}
Let $\ell \geq 5$ be prime.  Suppose $\leg{1-24i}{\ell} = -1$ for some $0 \leq i \leq 5$, then if $a_f(\ell n+ t) \equiv 0 \pmod{\ell}$, we have $t \equiv \frac{1-\ell^2}{24} \pmod{\ell}$.  

Similarly, suppose $\leg{-3i-2}{\ell} = -1$ for some $0 \leq i \leq 5$, then if $a_\omega(\ell n+ t) \equiv 0 \pmod{\ell}$, we have $t \equiv \frac{-2(1-\ell^2)}{3} \pmod{\ell}$.   
\end{corollary}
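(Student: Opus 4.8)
The plan is to follow the template of the proof of Corollary \ref{cortomainthm}: apply Theorem \ref{mock.cong} with $m=\ell$ and a convenient base index, rewrite the resulting family of admissible residues $t$ as a single Legendre symbol condition, and then intersect this family with the residues left open by Andersen's theorem (Theorems 1.1 and 1.3 of \cite{Andersen14}, quoted above). I expect the two inputs to be complementary enough that exactly one residue class modulo $\ell$ survives.

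First I would record the relevant initial coefficients. Expanding the defining $q$-series gives $(a_f(0),\dots,a_f(5)) = (1,1,-2,3,-3,3)$ and $(a_\omega(0),\dots,a_\omega(5)) = (1,2,3,4,6,8)$, so none of these integers has a prime factor exceeding $3$; in particular $\ell\nmid a_f(i)$ and $\ell\nmid a_\omega(i)$ for every prime $\ell\geq 5$ and every $0\leq i\leq 5$. Now fix $i$ with $\leg{1-24i}{\ell}=-1$ and apply Theorem \ref{mock.cong} with $m=\ell$ and $t_{0,f}=i$: the hypothesis $\leg{1-24t_{0,f}}{p}=-1$ holds for the unique prime $p=\ell$ dividing $m$, and $\ell\nmid a_f(i)$ as just noted. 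As in the proof of Corollary \ref{cortomainthm}, the parameter $d$ runs through all nonzero residues modulo $\ell$, so $d^2$ runs through the $(\ell-1)/2$ nonzero squares; the congruence $t\equiv i d^2+\tfrac{1-d^2}{24}\pmod{\ell}$ rearranges to $24t-1\equiv(24i-1)d^2\pmod{\ell}$, and since $24i-1$ is a nonzero nonresidue this is equivalent to $\leg{1-24t}{\ell}=\leg{1-24i}{\ell}=-1$. Hence Theorem \ref{mock.cong} rules out $\sum a_f(\ell n+t)q^n\equiv 0\pmod{\ell}$ for every $t$ with $\leg{1-24t}{\ell}=-1$.

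To conclude, I would feed in Andersen's theorem, which forces any $t$ admitting such a congruence to satisfy $\leg{24t-1}{\ell}\neq\leg{-1}{\ell}$; a one-line Legendre symbol manipulation shows this is precisely $\leg{1-24t}{\ell}\neq 1$. Together with the previous paragraph this leaves only $\leg{1-24t}{\ell}=0$, i.e.\ $24t\equiv 1\pmod{\ell}$, which is exactly $t\equiv(1-\ell^2)/24\pmod{\ell}$. The $\omega$ case is entirely parallel: with $t_{0,\omega}=i$ the congruence $t\equiv id^2+\tfrac{2}{3}(d^2-1)\pmod{\ell}$ becomes $3t+2\equiv(3i+2)d^2\pmod{\ell}$, hence $\leg{-3t-2}{\ell}=\leg{-3i-2}{\ell}=-1$, while Andersen's condition $\leg{3t+2}{\ell}\neq\leg{-1}{\ell}$ reads $\leg{-3t-2}{\ell}\neq 1$; combining these forces $3t\equiv-2\pmod{\ell}$, i.e.\ $t\equiv -2(1-\ell^2)/3\pmod{\ell}$. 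I do not foresee a genuine obstacle: the only care needed is in the Legendre symbol bookkeeping (checking that the two constraints genuinely overlap only on the zero class) and in recording the non-vanishing modulo $\ell$ of the small coefficients $a_f(i)$ and $a_\omega(i)$, both routine.
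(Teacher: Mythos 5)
Your proposal is correct and follows essentially the same route as the paper: apply Theorem \ref{mock.cong} with $m=\ell$ and $t_{0}=i$ (noting $\ell\nmid a_f(i)$, $\ell\nmid a_\omega(i)$ since those coefficients have no prime factor exceeding $3$), observe that the resulting incongruences cover exactly the $(\ell-1)/2$ classes with $\leg{1-24t}{\ell}=-1$ (resp.\ $\leg{-3t-2}{\ell}=-1$), and intersect with the classes left open by Andersen's theorem to isolate the single class $\ell\mid 1-24t$ (resp.\ $\ell\mid 3t+2$). The Legendre-symbol bookkeeping and the explicit coefficient values you list all check out.
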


\begin{proof}
Fix a prime $\ell \geq 5$.  Now suppose $\sum a_f(\ell n+t)q^n \equiv 0 \pmod{\ell}$.  Andersen's result implies that $\leg{1-24t}{\ell} = -1$ or $\ell\mid (1-24t)$.  By hypothesis, there exists  some $0 \leq i \leq 5$ for which  $\leg{1-24i}{\ell} = -1$.  Take the smallest such $i$ and note that all prime factors of $a_f(i)$ are strictly less than $5$ for $0 \leq i\leq 5$. 

We now apply Theorem \ref{mock.cong} with $m=\ell $ and $t_{0,f}=i$.  As in the proof of Corollary \ref{cortomainthm}, we see $d$ will run through all nonzero congruence classes modulo $\ell$, so $d^2$ will take on $(\ell-1)/2$ distinct values modulo $\ell.$  Additionally, we see that
\[
s \equiv id^2+\frac{1-d^2}{24}\pmod{\ell} 
\]
implies  
\[
\leg{1-24s}{\ell} \equiv \leg{1-24i}{\ell} = -1.  
\]
These two facts together imply that $t$ will be one of the $(\ell-1)/2$ distinct values $s$ modulo $\ell$  for which $s \equiv id^2+\frac{1-d^2}{24}\pmod{\ell}$.

A similar argument holds for $\sum a_\omega(\ell n+t)q^n \equiv 0 \pmod{\ell}$, noting that all prime factors of $a_\omega(i)$ are strictly less than $5$ for $0 \leq i\leq 5$, and Andersen's theorem covers the case $\leg{-(3t+2)}{\ell} = 1$.
\end{proof}

\begin{example}
If we search for $t$ such that $\sum a_\omega(40n+t)q^n \equiv 0 \pmod{5}$ as in Waldherr's results, we see that $5\nmid a_\omega(t)$ for $0\leq t \leq 39$ except $t=6,20,23,24,27,35$. Andersen's result implies incongruences for $t=23, 24$. 
We see that $5\nmid a(12)$ together with Corollary \ref{mock.Ramanujan} implies an incongruence for $t=20$.  This leaves only $t=6,23$, and $37$. We find $a_\omega(40+6) \not\equiv 0 \pmod{5}$.  Waldherr proves congruences for $t= 23,37$.
\end{example}

\section{Concluding Remarks} \label{conc}
Theorem \ref{mock.cong} proves incongruences for the mock theta functions $f(q)$ and $\omega(q)$.  Similar incongruence results may extend to other mock theta functions as well.  For example, in \cite{AndrewsDixitYee15} Andrews, A. Dixit, and A. Yee provide a partition-theoretic interpretation to the third order mock theta function
\[
\nu(q):=\sum_{n=0}^\infty \frac{q^{n^2+n}}{(-q;q^2)_{n+1}} = q\omega(q^2)+(-q^2;q^2)^3_\infty(q^2;q^2)_\infty
\]
and prove 
\[
a_\nu(10n+8) \equiv 0 \pmod{5}
\]
for all $n \in \Z$ where $\nu(q)=:\sum a_\nu(n)q^n$.  

As a result of Corollary \ref{mock.Ramanujan}, we can say the following.

\begin{corollary}
If $\ell$ is prime and $\ell \equiv 5,7 \pmod{8}$ and $\nu(\ell n+t) \equiv 0 \pmod{\ell}$ for all $n \in \Z$ then $t \equiv \frac{\ell^2-1}{3} \pmod{\ell}$.
\end{corollary}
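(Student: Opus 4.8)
The plan is to use the identity
$\nu(q) = q\,\omega(q^2) + (-q^2;q^2)_\infty^3(q^2;q^2)_\infty$
quoted above to transfer the problem from $\nu$ to $\omega$ and then to quote Corollary \ref{mock.Ramanujan}. Write $\theta(q) := (-q^2;q^2)_\infty^3(q^2;q^2)_\infty$; since every factor is a function of $q^2$, $\theta$ is a power series in $q^2$ and so contributes nothing to the odd-index coefficients of $\nu$. On the other hand $q\,\omega(q^2) = \sum_{k\ge 0} a_\omega(k)\, q^{2k+1}$ is supported on the odd exponents. Hence $a_\nu(2k+1) = a_\omega(k)$ for all $k \ge 0$, and this is the only input about $\nu$ that is needed.

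First I would record this coefficient identity. Next, assume $a_\nu(\ell n + t) \equiv 0 \pmod\ell$ for all (sufficiently large) $n$, and reduce $t$ modulo $\ell$ so that $0 \le t \le \ell - 1$. Since $\ell$ is odd, exactly one of $t$ and $\ell + t$ is odd; the indices $\ell n + t$ that are odd form an arithmetic progression with common difference $2\ell$, and one checks it exhausts a full residue class modulo $\ell$. Feeding these indices through $a_\nu(2k+1) = a_\omega(k)$ converts the hypothesis into $a_\omega(\ell k + t_\omega) \equiv 0 \pmod \ell$ for all $k \ge 0$, where $2 t_\omega \equiv t - 1 \pmod\ell$. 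Now I would invoke Corollary \ref{mock.Ramanujan} for $\omega$: its hypothesis requires $\leg{-3i-2}{\ell} = -1$ for some $0 \le i \le 5$, and taking $i = 0$ this is just $\leg{-2}{\ell} = -1$, which holds precisely for $\ell \equiv 5, 7 \pmod 8$ — exactly our assumption. The corollary then yields $t_\omega \equiv \tfrac{-2(1-\ell^2)}{3} \equiv -2\cdot 3^{-1} \pmod\ell$, whence $t \equiv 2 t_\omega + 1 \equiv -4\cdot 3^{-1} + 1 \equiv -3^{-1} \equiv \tfrac{\ell^2-1}{3} \pmod\ell$, as claimed.

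There is essentially no hard step: once the identity $a_\nu(2k+1) = a_\omega(k)$ is isolated, the result is a black-box application of Corollary \ref{mock.Ramanujan}, and the condition $\ell \equiv 5,7 \pmod 8$ is exactly what makes the $i=0$ instance of that corollary available (it is the assertion $\leg{-2}{\ell} = -1$). The only things demanding care are the bookkeeping in the parity split — verifying that the odd-index subprogression of $\{\ell n + t\}$ really covers a complete residue class modulo $\ell$, so that Corollary \ref{mock.Ramanujan} applies cleanly — and reconciling the normalizations (the $q^{2k+1}$ versus $q^k$ indexing, and the integer versus mod-$\ell$ forms of $\tfrac{\ell^2-1}{3}$), both of which are routine.
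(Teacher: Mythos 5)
Your proposal is correct and follows essentially the same route as the paper's proof: split the progression $\ell n+t$ by parity, use the identity $a_\nu(2k+1)=a_\omega(k)$ coming from $\nu(q)=q\,\omega(q^2)+(-q^2;q^2)_\infty^3(q^2;q^2)_\infty$, and apply Corollary \ref{mock.Ramanujan} with $i=0$, where $\ell\equiv 5,7\pmod 8$ is exactly the condition $\leg{-2}{\ell}=-1$. The bookkeeping ($2t_\omega\equiv t-1\pmod\ell$ and the final reduction to $t\equiv\frac{\ell^2-1}{3}\pmod\ell$) matches the paper's computation.
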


\begin{proof}
Suppose $\nu(\ell n+t) \equiv 0 \pmod{\ell}$  for all $n \in \Z$  where $\ell$ is prime.  We separate the progression $\ell n+t$ into subprogressions $2\ell n+t$ and $2\ell n+(\ell+t)$ and note that the odd coefficients of $\nu(q)$ come from $q\omega(q^2)$.  We see that if $t$ is odd then we must have $a_\omega(\ell n+(t-1)/2) \equiv 0 \pmod{\ell}$ for all $n\in \Z$, and if $t$ is even then we must have $a_\omega(\ell n+(\ell+t-1)/2) \equiv 0 \pmod{\ell}$ for all $n\in \Z$.  

If $\ell \equiv 5,7 \pmod{8}$, then $\leg{-2}{\ell} = -1$, so by Corollary \ref{mock.Ramanujan} we must have $(t-1)/2 \equiv -2(1-\ell^2)/3$ if $t$ is odd or $(\ell+t-1)/2 \equiv -2(1-\ell^2)/3$ if $t$ is even.
\end{proof}

Moreover, we note that the proof of Theorem \ref{mock.cong} requires the condition $\leg{1-24t_{0,f}}{p} = -1$ for some $p\mid m$ for $f(q)$ incongruences; however, we have not identified any results in the literature for which is a necessary condition, nor have we for the equivalent $\omega(q)$ condition.  It may be possible to remove this requirement.

%%%%%%%%%%%%%%%%%%%%%%%%%%%%%%%%%%%%%%%%%%%%%%%%%%%%

\bibliographystyle{alpha}
\bibliography{refs}

\end{document}